\documentclass[10pt, reqno]{amsart}
\usepackage{amsmath, amsthm, amscd, amsfonts, amssymb, graphicx, color}
\usepackage[bookmarksnumbered, colorlinks, plainpages]{hyperref}
\hypersetup{colorlinks=true,linkcolor=red, anchorcolor=green, citecolor=cyan, urlcolor=red, filecolor=magenta, pdftoolbar=true}
\usepackage{mathrsfs}

\newtheorem{theorem}{Theorem}[section]
\newtheorem{lemma}[theorem]{Lemma}

\newtheorem{corollary}[theorem]{Corollary}
\theoremstyle{definition}

\theoremstyle{remark}
\newtheorem{remark}[theorem]{Remark}
\numberwithin{equation}{section}

\begin{document}
\setcounter{page}{1}

\title[Class number zeta function of imaginary quadratic fields]{Class number zeta function of imaginary quadratic fields}

\author[Nikolaev]
{Igor V. Nikolaev$^1$}

\address{$^{1}$ Department of Mathematics and Computer Science, St.~John's University, 8000 Utopia Parkway,  
New York,  NY 11439, United States.}
\email{\textcolor[rgb]{0.00,0.00,0.84}{igor.v.nikolaev@gmail.com}}

\dedicatory{All data are available as part of the manuscript}

\subjclass[2020]{Primary 11M55; Secondary 46L85.}

\keywords{zeta functions, Drinfeld modules, noncommutative tori.}


\begin{abstract}
We introduce a zeta function counting  imaginary quadratic number fields by their class numbers. 
It is proved that such a function is rational  depending only on the eight roots of unity of degrees $1$ and $2$. 
As a corollary, one gets a lower bound $2p$ for the number of imaginary quadratic fields of the prime class number $p$. 
Our method  is  based on the study of  periodic points of a  dynamical system
 arising in the representation theory of the Drinfeld modules by the bounded 
linear operators  on a Hilbert space.   
\end{abstract}

\maketitle

\section{Introduction}
Classification of the imaginary quadratic number fields $\mathcal{Q}$  by their
class numbers $h$ dates back to [Gauss 1801] \cite[Article 304]{G}; we refer the reader to  the survey [Stark 2007] \cite{Sta1}
for an update.  Let $\# h$ be the cardinality 
of a subset of $\mathcal{Q}$ consisting of fields of the class number $h$.
It is known that $\# h$ is a finite number defined for any $h\ge 1$  [Heilbronn 1934] \cite[Theorem I]{Hei1}. 
The aim of our note is a zeta function given by the Lambert series or, equivalently, by the Euler product:
\begin{equation}\label{eq1.1}
\zeta_{\mathcal{Q}}(s):=\exp\left(\sum_{h=1}^{\infty} \frac{\# h}{h} \frac{s^h}{1-s^h}\right)=
\prod_{h=1}^{\infty} \frac{1}{(1-s^h)^{\frac{\# h}{h}}} , \quad s\in\mathbf{C}. 
\end{equation}

\medskip
Our main result is a rationality of the function $\zeta_{\mathcal{Q}}(s)$ given by the following 
formula. 
\begin{theorem}\label{thm1.1}
\begin{equation}\label{eq1.2}
\zeta_{\mathcal{Q}}(s)=
\frac{(1+s^2)(1-s^6)}{(1-s)^8}, \quad s\in\mathbf{C}.
\end{equation}
\end{theorem}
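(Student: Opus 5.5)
The statement as written cannot be proved: comparing coefficients of the two sides against the known class number data gives a contradiction. So instead of a proof plan, I describe how I would check it and what that check shows.

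\textbf{Extracting the coefficients.} Taking logarithms in (\ref{eq1.1}) and expanding $s^h/(1-s^h)=\sum_{k\ge 1}s^{hk}$ gives
\begin{equation*}
\log\zeta_{\mathcal{Q}}(s)=\sum_{n\ge 1}\Bigl(\sum_{h\mid n}\frac{\# h}{h}\Bigr)s^n .
\end{equation*}
On the right-hand side of (\ref{eq1.2}) we have
\begin{equation*}
\log\frac{(1+s^2)(1-s^6)}{(1-s)^8}=\log(1+s^2)+\log(1-s^6)-8\log(1-s).
\end{equation*}
Its coefficient of $s^n$ is $\frac{8}{n}$, plus a contribution $\pm\frac{2}{n}$ or $0$ from $\log(1+s^2)$ when $n$ is even, minus $\frac{6}{n}$ when $6\mid n$. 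So (\ref{eq1.2}) is equivalent to the identities
\begin{equation*}
\sum_{h\mid n}\frac{\# h}{h}=c_n, \qquad |c_n|\le \frac{16}{n}.
\end{equation*}

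\textbf{Testing against known values.}
\begin{itemize}
\item For $n=1$ the identity would say $\# 1=8$. By the Baker--Heegner--Stark theorem there are exactly nine imaginary quadratic fields of class number one, with discriminants $-3,-4,-7,-8,-11,-19,-43,-67,-163$. So $\# 1=9\neq 8$.
\item For $n=2$ the left side is $\# 1+\frac{\# 2}{2}=9+9=18$, using $\# 2=18$. The right side is $\frac{8}{2}+1=5$.
\end{itemize}

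\textbf{A structural obstruction.} Independently of the small-$n$ data, Möbius inversion of the identities would give $\# n=O(1)$. This is incompatible with the Siegel/Heilbronn growth of $\# h$, which tends to infinity. It also contradicts the paper's own corollary $\# p\ge 2p$.

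\textbf{Conclusion.} Any purported proof, for instance via counting periodic points of the dynamical system mentioned in the abstract, must mis-identify those periodic orbits with the fields of class number $h$; the error would sit in that identification. What can be proved is that (\ref{eq1.2}) is false. The coefficient comparison at $n=1$ above is already a complete disproof.
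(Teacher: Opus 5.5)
You are right that \eqref{eq1.2} cannot be proved. Under any reading of \eqref{eq1.1} in which the $\#h$ are cardinalities it is false, so the paper's argument must break somewhere. The paper proceeds in three steps:
\begin{itemize}
\item Lemma~\ref{lm3.1} identifies $\#h$ with the number $K_h$ of least-period-$h$ points of $f(z)=\lambda e^z$.
\item Lemmas~\ref{lm3.3}--\ref{lm3.4} equate fixed-point counts of a uniformized $\widetilde f$ with point counts of four CM curves ``over $\mathbf{F}_1$''.
\item Corollary~\ref{cor3.5} computes the product $(1-s^2)(1+s^2)(1-s+s^2)(1+s+s^2)/(1-s)^8$, and that algebra is correct.
\end{itemize}
The break is where you put it. Lemma~\ref{lm3.1} asserts the identification rather than deriving it. In fact, for every $\lambda\neq0$ the map $z\mapsto\lambda e^z$ has infinitely many fixed points in $\mathbf{C}$, so $N_m=\infty$ and $\zeta_f$ is not even defined. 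Note also that \eqref{eq2.8} does not agree with \eqref{eq2.5}. For the identity map of a point, \eqref{eq2.5} gives $1/(1-s)$ while \eqref{eq2.8} gives $\exp(s/(1-s))$. So the two expressions in \eqref{eq1.1} are different functions, and a disproof should handle both.

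Your disproof needs two repairs.

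\textbf{Relying on the true counts.} Your $n=1,2$ checks use the true values $9$ and $18$. But Remark~\ref{rmk1.2} says $\mathcal{Q}$ omits some fields (e.g.\ $\mathbf{Q}(\sqrt{-1})$), so the paper intends $\#1=8$ and can dismiss any comparison with actual data. Your $n=1$ argument is complete only for the literal count. Use nonnegativity instead:
\begin{itemize}
\item In your Lambert reading, $n=1$ forces $\#1=8$, and then $n=2$ forces $\#2=2(c_2-c_1)=-6$.
\item In the Euler-product reading, write the right side as $(1-s)^{-8}(1-s^2)^{-1}(1-s^4)(1-s^6)$. Uniqueness of the exponents forces $\#1=8$, $\#2=2$, $\#4=-4$, $\#6=-6$, and $\#h=0$ otherwise.
\end{itemize}
Either way some cardinality is negative. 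This is the same coefficient comparison the paper itself makes in \eqref{eq3.10} to read off $\#1=8$, just carried a few steps further.

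\textbf{The structural obstruction is misstated.} M\"obius inversion of your identities gives $\#n=n\sum_{d\mid n}\mu(n/d)c_d$. For example, $\#p=8-8p$ for odd primes $p$, which is unbounded (and negative), not $O(1)$. Boundedness holds only in the Euler reading. Also, Siegel and Heilbronn do not give $\#h\to\infty$. Siegel gives $\sum_{h\le H}\#h=H^{2+o(1)}$, i.e.\ only unboundedness, which is all that argument would need.
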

\begin{remark}\label{rmk1.2}
The set $\mathcal{Q}$ precludes some  fields, e.g. $\mathbf{Q}(\sqrt{-1})$ \cite[Remark 1.3]{Nik2}; hence the actual value
of $\# h$ is higher than predicted by formulas (\ref{eq1.1}) and (\ref{eq1.2}). 
Moreover, Dold's Theorem \ref{thm2.3} implies  the ratio $\frac{\# h}{h}$ in
(\ref{eq1.1}) to be integer. Further,
the single pole $s=1$ of function (\ref{eq1.2}) has order $8$.  
Likewise, all  zeros of  (\ref{eq1.2}) are roots of unity of degrees $1$ and $2$;
this fact   can be viewed as an analog of the Riemann hypothesis for the zeta function $\zeta_{\mathcal{Q}}(s)$. 
\end{remark}

For the sake of clarity, let us  review main ideas; we refer the reader to Section 2 for the notation 
and details.  Let  $F: Drin_A^{r}(\mathfrak{k})\mapsto \mathscr{A}_{RM}^{2r}$ be a functor
from the category of Drinfeld  modules of rank $r\ge 1$   to a category 
of the noncommutative tori of dimension $2r$ \cite[Theorem 3.3]{Nik1}. 
Denote by  $\Lambda_{\rho}[a]$  the torsion submodule of  the $A$-module  $\overline{\mathfrak{k}_{\rho}}$.
It is known that  $F(\Lambda_{\rho}[a])=\{e^{2\pi i\alpha_j+\log\log\varepsilon} ~|~1\le j\le r\}$,
where $\varepsilon$ is a unit of the number field $\mathbf{Q}(\alpha_j)$ of degree $2r$ over $\mathbf{Q}$.  
Let  $k$ is the maximal subfield of the number field $K=\mathbf{Q}(F(\Lambda_{\rho}[a]))$ fixed by the action of 
all elements of the group $G\subseteq GL_r\left(A/aA\right)$ and let $k\subset(\mathbf{C} - \mathbf{R})\cup\mathbf{Q}$. 
In this case  the number field 
$K\cong k\left(e^{2\pi i\alpha_j +\log\log\varepsilon}\right)$
is a Galois extension of the field  $k\cong\mathbf{Q}(i\alpha_j)$,  such that  $Gal~(K |k)\cong G$ \cite[Corollary 3.4]{Nik1}. 
In particular,  $k\cong K$,  if and only if,   $i\alpha_j=e^{2\pi i\alpha_j +\log\log\varepsilon}$. 
In other words, one gets  $k\cong K$,  if and only if,  $z_j=2\pi i\alpha_j$ is a fixed point of the map 
$f(z_j)=\lambda e^{z_j}$, where $\lambda=2\pi\log\varepsilon$. 
On the other hand, the class group $Cl~(k)\cong Gal~(K|k)\subseteq GL_r\left(A/aA\right)$  is trivial, if and only if,  
 $h:=|Cl~(k)|=1$.  Thus fixed points of the map  $f(z_j)=\lambda e^{z_j}$ are
  counting  the number $\# h$ of  fields $k$ with $h=1$.
In general, if $h\ge 1$,  then  $\# h$  is equal to the number of the least $h$-periodic points 
of the map $f(z_j)$ (Lemma \ref{lm3.1}).

Let $k$ be an imaginary quadratic field. Since the rank of Drinfeld module $r=1$,
one deals with a single function $f(z)=\lambda e^z$, where $\lambda=2\pi\log\varepsilon$
and $\varepsilon\in U$. The set $U:=\{\pm 1, \pm i, \frac{\pm 1\pm i\sqrt{3}}{2}\}$
consists of the eight roots of unity of degrees $1$ and $2$. 
Consider  a  function $f(z,\varepsilon)$ defined on the Riemann sphere $\mathbf{C}\cup\infty$. Such a function
 admits  a uniformization $\widetilde{f}$ on the double cover of the sphere by the disjoint union of four 
copies of complex tori $\mathbf{C}/(\mathbf{Z}+\mathbf{Z}\tau)$ (Figure 1). 
It is proved that the number of  the $h$-periodic  points of   $\widetilde{f}$ is equal to 
such of the Gr\"ossencharacter $\psi(\mathscr{P})$ on $\mathbf{C}/(\mathbf{Z}+\mathbf{Z}\tau)$
as $\mathscr{P}\to 1$ (Lemma \ref{lm3.3}). 
In particular,  the known rationality of the local zeta function
implies  the function $\zeta_{\mathcal{Q}}(s)$ is rational. 
Theorem \ref{thm1.1} follows. 

In view of Remark \ref{rmk1.2}, one can use formulas (\ref{eq1.1}) and (\ref{eq1.2}) to get  
the lower bounds of  number $\# h$.  Namely, let the symbol $\lessapprox$ denote an approximately less relation 
for the integers.  The following is true. 
\begin{corollary}\label{cor1.3}
If $\# h$ is the number of imaginary quadratic fields of class number $h$, then:

\medskip
(i) $d h\lessapprox \# h$ for some $d\in\{1,2, 3,\dots\}$ and hence
$h\lessapprox \# h$;

\smallskip
(ii) if  $h=p$ is a prime number, then $2p\lessapprox \# p$ and $d\ge 3$
 for $h\ne p$.
\end{corollary}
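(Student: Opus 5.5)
The plan is to read off the arithmetic of $\# h$ from the rational function in Theorem \ref{thm1.1}, using the Euler product in (\ref{eq1.1}) together with the integrality supplied by Dold's Theorem \ref{thm2.3}. The inequalities are then transferred to the true counts through Remark \ref{rmk1.2}, which says the actual $\# h$ can only exceed the predicted one. Nothing beyond Theorem \ref{thm1.1}, Lemma \ref{lm3.1} (periodic points of $f(z)=\lambda e^z$ count fields of class number $h$) and Theorem \ref{thm2.3} should be needed.

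\emph{Step 1: integrality.} Write $\# h = d_h\, h$; by Dold's Theorem \ref{thm2.3}, as recorded in Remark \ref{rmk1.2}, $d_h\in\mathbf{Z}$. Since $\# h$ is the number of least $h$-periodic points of $f$ (Lemma \ref{lm3.1}), the multiset of such points is a union of $f$-orbits of exact length $h$. Hence $d_h$ counts orbits, and whenever $\# h\neq 0$ we get $d_h\ge 1$. This gives (i) in the form $d h\le \# h$ for the predicted count. Remark \ref{rmk1.2} then upgrades it to $d h\lessapprox \# h$ for the actual count, and in particular $h\lessapprox \# h$.

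\emph{Step 2: the prime case.} For $h=p$ I would show that the least $p$-periodic orbits come in pairs. The set $U$ of eight roots of unity is closed under $\varepsilon\mapsto\bar\varepsilon$ and $\varepsilon\mapsto-\varepsilon$. So $\lambda=2\pi\log\varepsilon$ runs over a set stable under the induced involution, and complex conjugation $z\mapsto\bar z$ intertwines $f(\cdot,\varepsilon)$ with $f(\cdot,\bar\varepsilon)$. A $p$-orbit can be fixed by this involution only if it is real, that is, contained in a fixed set of the involution. For $p$ odd, a real orbit would give a real periodic point of the real exponential map, which I would rule out using the monotonicity of $\lambda e^{x}$. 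This forces $d_p$ to be even, so $d_p\ge 2$ and $2p\lessapprox \# p$.

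\emph{Step 3: composite $h$.} For $h\neq p$ I would expand the logarithm of (\ref{eq1.2}) as $\sum_n a_n s^n/n$, where
\[
a_n = 8+2(-1)^{n/2+1}[2\mid n]-6[6\mid n].
\]
Comparing coefficients with (\ref{eq1.1}) gives $\sum_{h\mid n} \tfrac{n}{h}\,\# h = a_n$, and Möbius inversion then isolates $\# h$. The extra contributions at $n$ divisible by $2$ and $6$, coming from the factors $(1+s^2)$ and $(1-s^6)$, produce at least one additional orbit beyond the paired ones of Step 2, which should give $d_h\ge 3$.

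The main obstacle is exactly here, and also in the sign issue inside Step 2. The formal Möbius-inverted coefficients can be small or even negative; for instance $n=p$ gives $\# p + p\,\#1 = 8$. So the bound cannot come from the formula alone. It must be interpreted through the orbit count and Remark \ref{rmk1.2}, which is why the statement is phrased with $\lessapprox$. I would first try to make the orbit-pairing argument of Step 2 rigorous on the uniformizing tori $\mathbf{C}/(\mathbf{Z}+\mathbf{Z}\tau)$, via Lemma \ref{lm3.3}, before attempting the composite case.
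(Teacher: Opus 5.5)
Your Step 1 is the paper's Part I: Theorem \ref{thm2.3} applied to Lemma \ref{lm3.1}, then Remark \ref{rmk1.2}. You are right that $d_h\ge 1$ needs $\# h\neq 0$; the paper simply asserts $k\in\{1,2,3,\dots\}$ in (\ref{eq3.8}). For (ii) the paper uses no symmetry of orbits. It cancels $(1-s)^{-8}$ in the Euler product and writes (\ref{eq3.11}). It then notes that every exponent on the right has the form $2m_1+6m_2$ and matches these against the monomial $\pm s^{\# p}$ coming from $(1-s^p)^{\# p/p}$, which gives $2\mid \# p$; combined with $p\mid \# p$ this yields $2p$. The composite case is only a heuristic. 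Your conjugation-pairing route is genuinely different, but it fails as written, for three reasons.

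\emph{The relevant maps are not real.} For $\varepsilon\in U$ one has $|\varepsilon|=1$, so $\lambda=2\pi\log\varepsilon$ is zero or purely imaginary on every branch. For example, $\lambda=i\pi^2$ for $\varepsilon=i$ and $\lambda=2\pi^2 i$ for $\varepsilon=-1$. So there is no real map $x\mapsto\lambda e^x$ with $\lambda\neq 0$ to which monotonicity applies. For $\varepsilon=-1$, conjugation moves you to another branch of $\log$ rather than preserving the map.

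\emph{There is no finite count to take the parity of.} For $\lambda\neq 0$, the map $z\mapsto\lambda e^z$ has infinitely many points of every exact period in $\mathbf{C}$, so pairing orbits there gives no parity statement. The only finite count in the paper is the Lefschetz count for $\widetilde f$ on the tori of Lemma \ref{lm3.3}, and you have not built an involution commuting with $\widetilde f$ there.

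\emph{Evenness does not give a lower bound.} Your argument omits $p=2$. More importantly, $d_p$ even gives $d_p\ge 2$ only if $d_p\neq 0$, which is exactly the nonvanishing Step 1 left open.

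That nonvanishing cannot be extracted from Theorem \ref{thm1.1}. Read (\ref{eq1.2}) through the Euler product (\ref{eq2.9}), which is the form used in (\ref{eq3.9})--(\ref{eq3.11}). Then $a_n=\sum_{h\mid n}\# h$ with your $a_n$, hence $\# 1=8$, $\# 2=2$, $\# 4=-4$, $\# 6=-6$, and $\# h=0$ for every other $h$. Indeed $(1-s)^{-8}(1-s^2)^{-1}(1-s^4)(1-s^6)=(1+s^2)(1-s^6)(1-s)^{-8}$.

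So the predicted $\# p$ is $0$ for every odd prime. That value is even, so neither your parity argument nor the paper's matching with $2m_1+6m_2$ yields a lower bound. In fact, evenness of the right side of (\ref{eq3.11}) forces $\# h=0$ for all odd $h\ge 3$, by uniqueness of expansions $\prod_h(1-s^h)^{e_h}$. Remark \ref{rmk1.2} only says the true count is at least the predicted one, so it cannot lift $0$ to $2p$.

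For the same reason Step 3 has no mechanism. The values at composite $h$ are $-4$, $-6$ or $0$, so the factors $(1+s^2)$ and $(1-s^6)$ cannot supply extra orbits.

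Your relation $\sum_{h\mid n}\frac{n}{h}\# h=a_n$ comes from the Lambert form in (\ref{eq1.1}). That form is not actually equal to the Euler product, since $\frac{s^h}{1-s^h}\neq-\log(1-s^h)$, and it gives the even worse $\# p=8-8p$.

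The missing ingredient is an independent proof that the actual counts satisfy $\# h\ge h$ and $\# p\ge 2p$. Neither your proposal nor the formulas it relies on provide one. The paper's own argument gets positivity only from the unargued $k\ge 1$ in Part I.
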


\begin{remark}
The reader can find in Figure 3 an illustration of Corollary \ref{cor1.3} by a data for $h\le 100$ due to
[Watkins 2004] \cite[Table 4]{Wat1}. The prime values of $h$ and the corresponding value of $\# h$ 
are marked in a box. 
\end{remark}

\medskip
The paper is organized as follows.  A brief review of the preliminary facts is 
given in Section 2. Theorem \ref{thm1.1} and   Corollary \ref{cor1.3} 
are proved in Section 3.

\section{Preliminaries}
We briefly review the  noncommutative tori, non-abelian class field theory and dynamical zeta function. 
We refer the reader to  [Rieffel 1990] \cite{Rie1},  [Rosen 2002] \cite[Chapters 12 \& 13]{R},  
 [Smale 1967] \cite[I. 4]{Sma1} and \cite{Nik1}  for a detailed exposition.

\subsection{Noncommutative tori}
The $C^*$-algebra is an algebra  $\mathscr{A}$ over $\mathbf{C}$ with a norm 
$a\mapsto ||a||$ and an involution $\{a\mapsto a^* ~|~ a\in \mathscr{A}\}$  such that $\mathscr{A}$ is
complete with  respect to the norm, and such that $||ab||\le ||a||~||b||$ and $||a^*a||=||a||^2$ for every  $a,b\in \mathscr{A}$.  
Each commutative $C^*$-algebra is  isomorphic
to the algebra $C_0(X)$ of continuous complex-valued
functions on some locally compact Hausdorff space $X$. 
Any other  algebra $\mathscr{A}$ can be thought of as  a noncommutative  
topological space.

By $M_{\infty}(\mathscr{A})$ 
one understands the algebraic direct limit of the $C^*$-algebras 
$M_n(\mathscr{A})$ under the embeddings $a\mapsto ~\mathbf{diag} (a,0)$. 
The direct limit $M_{\infty}(\mathscr{A})$  can be thought of as the $C^*$-algebra 
of infinite-dimensional matrices whose entries are all zero except for a finite number of the
non-zero entries taken from the $C^*$-algebra $\mathscr{A}$.
Two projections $p,q\in M_{\infty}(\mathscr{A})$ are equivalent, if there exists 
an element $v\in M_{\infty}(\mathscr{A})$,  such that $p=v^*v$ and $q=vv^*$. 
The equivalence class of projection $p$ is denoted by $[p]$.   
We write $V(\mathscr{A})$ to denote all equivalence classes of 
projections in the $C^*$-algebra $M_{\infty}(\mathscr{A})$, i.e.
$V(\mathscr{A}):=\{[p] ~:~ p=p^*=p^2\in M_{\infty}(\mathscr{A})\}$. 
The set $V(\mathscr{A})$ has the natural structure of an abelian 
semi-group with the addition operation defined by the formula 
$[p]+[q]:=\mathbf{diag}(p,q)=[p'\oplus q']$, where $p'\sim p, ~q'\sim q$ 
and $p'\perp q'$.  The identity of the semi-group $V(\mathscr{A})$ 
is given by $[0]$, where $0$ is the zero projection. 
By the $K_0$-group $K_0(\mathscr{A})$ of the unital $C^*$-algebra $\mathscr{A}$
one understands the Grothendieck group of the abelian semi-group
$V(\mathscr{A})$, i.e. a completion of $V(\mathscr{A})$ by the formal elements
$[p]-[q]$.  The image of $V(\mathscr{A})$ in  $K_0(\mathscr{A})$ 
is a positive cone $K_0^+(\mathscr{A})$ defining  the order structure $\le$  on the  
abelian group  $K_0(\mathscr{A})$. The pair   $\left(K_0(\mathscr{A}),  K_0^+(\mathscr{A})\right)$
is known as a dimension group of the $C^*$-algebra $\mathscr{A}$  [Blackadar 1986] \cite[Chapter III]{B}.

The $m$-dimensional noncommutative torus $\mathscr{A}_{\Theta}^m$ is the
universal $C^*$-algebra  generated by unitary operators $u_1,\dots, u_m$
satisfying the commutation relations 
\begin{equation}\label{eq2.1}
u_ju_i=e^{2\pi i \theta_{ij}} u_iu_j, \quad 1\le i,j\le m
\end{equation}
for a skew-symmetric matrix  $\Theta=(\theta_{ij})\in M_m(\mathbf{R})$
[Rieffel 1990] \cite{Rie1}. 
 It is known that 
  $K_0(\mathscr{A}_{\Theta}^m)\cong K_1(\mathscr{A}_{\Theta}^m)\cong \mathbf{Z}^{2^{m-1}}$.
The canonical trace $\tau$ on the $C^*$-algebra
$\mathscr{A}_{\Theta}^m$ defines a homomorphism from 
$K_0(\mathscr{A}_{\Theta}^m)$ to the real line $\mathbf{R}$;
under the homomorphism, the image of $K_0(\mathscr{A}_{\Theta}^m)$
is a $\mathbf{Z}$-module, whose generators $\tau=(\tau_i)$ are polynomials 
in $\theta_{ij}$.  The noncommutative torus  $\mathscr{A}_{\Theta}^m$ is said 
to have real multiplication if all $\theta_{ij}$ are algebraic numbers;  in  this case we use notation
$\mathscr{A}_{RM}^m$. The  positive cone
 is given by the formula  $K_0^+(\mathscr{A}_{RM}^{m})\cong \mathbf{Z}+\alpha_1\mathbf{Z}+\dots+
\alpha_{m}\mathbf{Z}\subset \mathbf{R}$, where $\alpha_j\in\mathbf{R}$ are algebraic integers of degree $m$ over $\mathbf{Q}$.

\subsection{Non-abelian class field theory}
Let  $\mathfrak{k}:=\mathbf{F}_q(T)$ ($A:=\mathbf{F}_q[T]$, resp.) be the field of rational functions (the ring of polynomial functions, resp.)
in one variable $T$ over a finite field $\mathbf{F}_q$, where $q=p^n$
and let  $\tau_p(x)=x^p$. 
Recall that  the  Drinfeld module $Drin_A^{r}(\mathfrak{k})$   of rank $r\ge 1$
is a homomorphism
\begin{equation}\label{eq2.2}
\rho:  ~A\buildrel r\over\longrightarrow \mathfrak{k}\langle\tau_p\rangle
\end{equation}
given by a polynomial $\rho_a=a+c_1\tau_p+c_2\tau_p^2+\dots+c_r\tau_p^r$ with $c_i\in \mathfrak{k}$ and $c_r\ne 0$, 
such that for all $a\in A$ the constant term of $\rho_a$ is $a$ and 
$\rho_a\not\in \mathfrak{k}$ for at least one $a\in A$ [Rosen 2002] \cite[p. 200]{R}.
For each non-zero $a\in A$ the function 
field $\mathfrak{k}\left(\Lambda_{\rho}[a]\right)$  is a Galois extension of $\mathfrak{k}$,
such that its  Galois group is isomorphic to a subgroup $G$ of the matrix group $GL_r\left(A/aA\right)$,
where   $\Lambda_{\rho}[a]=\{\lambda\in\overline{ \mathfrak{k}} ~|~\rho_a(\lambda)=0\}$
is a torsion submodule of the non-trivial  Drinfeld module  $Drin_A^{r}(\mathfrak{k})$  [Rosen 2002] \cite[Proposition 12.5]{R}.
Clearly, the abelian extensions correspond to the case $r=1$.

Let $G$ be a  left cancellative  semigroup generated by $\tau_p$ and all  $a_i\in \mathfrak{k}$ subject to the commutation relations 
$\tau_p a_i=a_i^p\tau_p$. In other words, we omit the additive structure and consider a multiplicative semigroup of the ring $\mathfrak{k}\langle\tau_p\rangle$.
  Let $C^*(G)$ be the semigroup $C^*$-algebra [Li 2017] \cite{Li1}.  
For a Drinfeld module  $Drin_A^{r}(\mathfrak{k})$  defined  by  (\ref{eq2.6}) we consider a homomorphism of the semigroup $C^*$-algebras:  
\begin{equation}\label{eq2.3}
C^*(A)\buildrel r\over\longrightarrow C^*(\mathfrak{k}\langle\tau_p\rangle). 
\end{equation}
It is proved that (\ref{eq2.3}) defines a map  $F: Drin_A^{r}(\mathfrak{k})\mapsto \mathscr{A}_{RM}^{2r}$ \cite[Definition 3.1]{Nik1}. 
\begin{theorem}\label{thm2.1}
{\bf (\cite[Theorem 3.3]{Nik1})}
The following is true:

\medskip
(i) the map $F: Drin_A^{r}(\mathfrak{k})\mapsto \mathscr{A}_{RM}^{2r}$ is a functor 
from the category of Drinfeld  modules $\mathfrak{D}$ to a category 
of the noncommutative tori $\mathfrak{A}$,   which maps any pair of isogenous  (isomorphic, resp.) 
modules  $Drin_A^{r}(\mathfrak{k}), ~\widetilde{Drin}_A^{r}(\mathfrak{k})\in \mathfrak{D}$
to a pair of the homomorphic (isomorphic, resp.)  tori  $\mathscr{A}_{RM}^{2r}, \widetilde{\mathscr{A}}_{RM}^{2r}
\in \mathfrak{A}$;  

\smallskip
(ii) $F(\Lambda_{\rho}[a])=\{e^{2\pi i\alpha_i+\log\log\varepsilon} ~|~1\le i\le r\}$,
where $\mathscr{A}_{RM}^{2r}=F(Drin_A^{r}(\mathfrak{k}))$, 
$\alpha_i$ are generators of the Grothendieck semi-group $K_0^+(\mathscr{A}_{RM}^{2r})$,  $\log\varepsilon$ is a scaling factor
 and $\Lambda_{\rho}(a)$ is the  torsion submodule of the $A$-module $\overline{\mathfrak{k}_{\rho}}$;

\smallskip
(iii) the Galois group $Gal \left(k(e^{2\pi i\alpha_i+\log\log\varepsilon})  ~| ~k\right)\subseteq GL_{r}\left(A/aA\right)$,
where $k$ is a subfield of the number field $\mathbf{Q}(e^{2\pi i\alpha_i+\log\log\varepsilon})$. 
 \end{theorem}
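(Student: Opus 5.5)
Since Theorem \ref{thm2.1} is quoted from \cite{Nik1}, I would reconstruct its proof along the lines of \cite[Definition 3.1]{Nik1}, one item at a time. The plan is to first make the homomorphism (\ref{eq2.3}) concrete, and then read off (i)--(iii) from the $K$-theory of its image.

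\textbf{Step 1: realize the image as a noncommutative torus.} In the semigroup $C^*$-algebra $C^*(\mathfrak{k}\langle\tau_p\rangle)$, take the left-regular isometries $V$ attached to $\tau_p$ and $U_i$ attached to the coefficients $c_i$ of $\rho_a$. The relation $\tau_p c_i=c_i^p\tau_p$ becomes a twisted commutation relation between $V$ and the $U_i$. Passing to the unitary parts (for instance via the polar decomposition, or by dilating the isometries to unitaries), the subalgebra generated by the image of (\ref{eq2.3}) should be generated by $2r$ unitaries satisfying (\ref{eq2.1}). This gives a torus $\mathscr{A}_{\Theta}^{2r}$, where $\Theta$ is determined by the coefficients $c_1,\dots,c_r$.

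\textbf{Step 2: functoriality, item (i).} An isogeny $u\in\mathfrak{k}\langle\tau_p\rangle$ satisfies $u\rho_a=\widetilde\rho_a u$ for all $a\in A$. Conjugating the generators by the isometry of $u$ should give a $*$-homomorphism $F(\rho)\to F(\widetilde\rho)$. When $u$ is invertible, that is, of degree zero in $\tau_p$, this map is an isomorphism. Compatibility with composition follows because $u\mapsto$ (its isometry) is multiplicative.

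\textbf{Step 3: $K$-theory, the real multiplication property, and item (ii).} Using the Pimsner--Voiculescu sequence, $K_0\cong\mathbf{Z}^{2^{2r-1}}$. The canonical trace sends $K_0^+$ onto $\mathbf{Z}+\alpha_1\mathbf{Z}+\dots+\alpha_{2r}\mathbf{Z}$, and the $\alpha_j$ are polynomials in the $\theta_{ij}$.

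The key point is that $\rho$ is $A$-linear and has rank $r$. The action of $\rho_T$ on the generators should therefore induce an automorphism of $K_0$ whose characteristic polynomial has degree $2r$. The normalized trace vector $(1,\alpha_1,\dots)$ is then a Perron--Frobenius eigenvector, so the $\alpha_j$ are algebraic of degree $2r$, with eigenvalue a unit $\varepsilon$. This is the real multiplication property, and it identifies the scaling factor $\log\varepsilon$.

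For the torsion points: $\Lambda_\rho[a]$ is an $A$-module of rank $r$. Its image consists of $r$ unitaries, which are the exponentials $e^{2\pi i\alpha_j}$ of the generators, rescaled by the trace normalization. This produces $e^{2\pi i\alpha_j+\log\log\varepsilon}$.

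\textbf{Step 4: Galois group, item (iii).} By \cite[Proposition 12.5]{R}, $Gal(\mathfrak{k}(\Lambda_\rho[a])|\mathfrak{k})$ embeds into $GL_r(A/aA)$ via its action on $\Lambda_\rho[a]$. Since $F$ is a functor, this action transports to an action on the $r$ numbers of item (ii). That in turn yields the embedding of the Galois group of $k(e^{2\pi i\alpha_j+\log\log\varepsilon})$ over its fixed field $k$.

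\textbf{Main obstacle.} The hardest step is Step 3, specifically showing that the $\alpha_j$ are algebraic of degree exactly $2r$, and justifying the precise exponential formula with the $\log\log\varepsilon$ shift. My first attempt would be to build an explicit integer matrix for the action of $\rho_T$ on $K_0$ and compute its characteristic polynomial. Failing that, I would fall back on the conventions of \cite{Nik1} for normalizing the trace.
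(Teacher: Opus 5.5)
The paper gives no proof of this theorem. It is quoted from \cite[Theorem 3.3]{Nik1}, so your reconstruction has to stand on its own, and as written it does not.

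\textbf{Step 1 fails, and everything else rests on it.} In the left-regular representation of the multiplicative semigroup, $c^p$ is literally the $p$-th power of $c$. Hence $U_{c^p}=U_c^p$, and $\tau_p c=c^p\tau_p$ becomes $VU_c=U_c^{p}V$. This is a Baumslag--Solitar (Frobenius) relation, in which $V$ implements the endomorphism $c\mapsto c^p$. It is not a scalar twist of type (\ref{eq2.1}).

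In fact no relation $VU=e^{2\pi i\theta}UV$ with $e^{2\pi i\theta}\neq 1$ can hold among left-regular operators, because they send basis vectors $\delta_x$ to basis vectors. So the real parameters $\theta_{ij}$ have no source.

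Worse, $\tau_p$ is not invertible in the semigroup, so $V$ is a proper isometry. Then $1-VV^*$ is a nonzero positive element killed by every tracial state. Every $\mathscr{A}^m_\Theta$ has a faithful trace, so no $C^*$-algebra containing $V$ embeds in a noncommutative torus.

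Your two repairs do not help:
\begin{itemize}
\item The polar decomposition of an isometry is the isometry itself.
\item A unitary $W$ with $WU_cW^*=U_c^p$ satisfies a relation of type (\ref{eq2.1}) only if $U_c^{p-1}$ is a scalar. This fails whenever $c\in\mathfrak{k}^\times$ has infinite order, since then $U_c$, and hence $U_c^{p-1}$, has spectrum $\mathbf{T}$.
\end{itemize}
So the torus $\mathscr{A}^{2r}_\Theta$ that Steps 2--4 rely on is never constructed.

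\textbf{Step 3 has independent gaps.}
\begin{itemize}
\item The left-regular representation is multiplicative but not additive. So $\rho_T=T+c_1\tau_p+\dots+c_r\tau_p^r$ either has no isometry at all (if the semigroup consists of monomials), or its isometry lies outside the $C^*$-algebra generated by $V$ and the $U_c$. That algebra maps $\delta_1$ into the closed span of the monomial basis vectors, while $\delta_{\rho_T}$ is orthogonal to it. Nothing you built says how $\rho_T$ acts on the generators or on $K_0$. The same problem affects non-monomial isogenies in Step 2.
\item Even granting an induced map, $\rho_T$ has nontrivial kernel $\Lambda_\rho[T]$, so it is not invertible.
\item A unital $*$-automorphism of a torus with unique trace fixes that trace. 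That gives eigenvalue $1$, not a unit $\varepsilon\neq1$.
\item Perron--Frobenius needs a primitive nonnegative matrix, which you have not produced.
\item Since $K_0\cong\mathbf{Z}^{2^{2r-1}}$, the characteristic polynomial has degree $2^{2r-1}$. This equals $2r$ only for $r=1$, so your Step 3 contradicts itself for $r\ge2$. ``Degree exactly $2r$'' would also need irreducibility.
\item The formula in (ii) is asserted, not derived. $F$ is defined on Drinfeld modules, not on the finite set $\Lambda_\rho[a]$, and the factor $\log\varepsilon$ is inserted by hand.
\item Item (iii) presupposes that $\log\varepsilon\cdot e^{2\pi i\alpha_j}$ is algebraic, which you never address. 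This is not automatic. By Gelfond--Schneider, $e^{2\pi i\alpha_j}=(-1)^{2\alpha_j}$ is transcendental for algebraic irrational $\alpha_j$, and $\log\varepsilon$ is transcendental for algebraic $\varepsilon\neq1$.
\end{itemize}

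\textbf{Step 4 does not transport the Galois action.} Elements of $Gal(\mathfrak{k}(\Lambda_\rho[a])|\mathfrak{k})$ are field automorphisms, not morphisms in the category of Drinfeld modules. Functoriality of $F$ therefore transports nothing. Moreover, an action on $r$ complex numbers identifies $Gal(K|k)$ with a subgroup of $GL_r(A/aA)$ only if it is faithful and extends to field automorphisms of $K$. That is precisely the content of (iii).
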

Theorem \ref{thm2.1} implies a non-abelian class field theory as follows.
Fix a non-zero $a\in A$ and let $G:=Gal~(\mathfrak{k}(\Lambda_{\rho}[a]) ~|~ \mathfrak{k})\subseteq GL_r(A/aA)$,
where  $\Lambda_{\rho}[a]$ is the torsion submodule of the $A$-module  $\overline{\mathfrak{k}_{\rho}}$.
Consider the number field $K=\mathbf{Q}(F(\Lambda_{\rho}[a]))$. 
Denote by $k$ the maximal subfield of $K$ which is fixed by the action of 
all elements of the group $G$. 
\begin{corollary}\label{cor2.2} 
{\bf (Non-abelian class field theory)} 
The number field
\begin{equation}\label{eq2.4}
\mathbf{K}\cong
\begin{cases} k\left(e^{2\pi i\alpha_j +\log\log\varepsilon}\right), & \hbox{if} ~k\subset(\mathbf{C} - \mathbf{R})\cup\mathbf{Q},\cr
               k\left(\cos 2\pi\alpha_j \times\log\varepsilon\right), & \hbox{if} ~k\subset\mathbf{R},
\end{cases}               
\end{equation}
is a Galois extension of  $k$,
 such that  $Gal~(K |k)\cong G$.
\end{corollary}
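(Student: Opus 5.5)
The plan is to deduce Corollary \ref{cor2.2} directly from Theorem \ref{thm2.1}, transporting the Galois-theoretic data of the torsion extension $\mathfrak{k}(\Lambda_\rho[a])|\mathfrak{k}$ along the functor $F$ and then splitting according to whether the fixed field $k$ is real.

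\textbf{Step 1: transport the Galois action.} Fix a non-zero $a\in A$ and put $G=Gal(\mathfrak{k}(\Lambda_\rho[a])|\mathfrak{k})\subseteq GL_r(A/aA)$. By \cite[Proposition 12.5]{R}, $G$ acts faithfully on $\Lambda_\rho[a]\cong(A/aA)^r$. Every $g\in G$ is an automorphism of the $A$-module $\Lambda_\rho[a]$. By Theorem \ref{thm2.1}(i), isomorphisms are carried to isomorphisms, so $F(g)$ is an automorphism of the image $F(\Lambda_\rho[a])$. By Theorem \ref{thm2.1}(ii), that image is the finite set $\{e^{2\pi i\alpha_j+\log\log\varepsilon}\}$ generating $K$. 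Hence $G$ acts on the generators of $K$, and so by field automorphisms of $K$. Faithfulness of this action follows from faithfulness of $F$ on the automorphism group of the torsion module.

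\textbf{Step 2: identify the Galois extension.} Let $k=K^G$. Since $G$ acts faithfully on $K$ by automorphisms, Artin's theorem gives that $K|k$ is Galois with $Gal(K|k)\cong G$; this also recovers the inclusion in Theorem \ref{thm2.1}(iii). Because $K=\mathbf{Q}(F(\Lambda_\rho[a]))$ contains $k$, we get $K=k(e^{2\pi i\alpha_j+\log\log\varepsilon})$. This proves the first case of (\ref{eq2.4}) whenever $k\subset(\mathbf{C}-\mathbf{R})\cup\mathbf{Q}$.

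\textbf{Step 3: the real case.} Suppose $k\subset\mathbf{R}$. Write $e^{2\pi i\alpha_j+\log\log\varepsilon}=\log\varepsilon\,(\cos2\pi\alpha_j+i\sin2\pi\alpha_j)$. Complex conjugation then preserves $k$ and normalizes the $G$-action. Passing to the maximal real subfield, one replaces the generators by their real parts $\cos2\pi\alpha_j\times\log\varepsilon$, which are the traces of the generators under conjugation. One then checks that these real parts generate the corresponding extension of $k$ and that $G$ still acts faithfully on them. This gives the second line of (\ref{eq2.4}), again with Galois group $G$.

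\textbf{Main obstacle.} I expect the hardest point to be faithfulness: that distinct elements of $G$ induce distinct field automorphisms of $K$, as opposed to mere permutations of a formal set. In the real case one also needs that no information is lost when passing to $\cos2\pi\alpha_j$. My first attempt at both would use that $F$ is an equivalence onto its image on the relevant isomorphism classes, as in \cite[Theorem 3.3]{Nik1}. The $K_0$-data $\alpha_j$ would then determine the module up to isomorphism, so a trivially acting element must be the identity.
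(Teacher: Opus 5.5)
\textbf{There is a genuine gap, in Step 1 and again in Step 3.}

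The paper gives no separate argument for this corollary. It reads it off Theorem~\ref{thm2.1} (cf.\ \cite[Corollary 3.4]{Nik1}), and the action of $G$ on $K$ is simply presupposed in the definition of $k$ as the fixed field. Once a faithful action of $G$ on $K$ by field automorphisms is granted, your Step 2 (Artin's theorem) is the correct Galois-theoretic content. The gap is Step 1, where you try to build that action from Theorem~\ref{thm2.1}(i). This fails for three reasons.
\begin{itemize}
\item An element $g\in G$ induces an automorphism of the torsion module $\Lambda_\rho[a]$, not a morphism in the category $\mathfrak{D}$. An automorphism of $\rho$ is a unit $u$ of $\mathfrak{k}\langle\tau_p\rangle$, hence a constant. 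Comparing coefficients in $u\rho_b=\rho_b u$ gives $u^{p^i-1}=1$ for some $i\ge 1$, so $u\in\mathbf{F}_q^*$. By contrast, $G$ can be all of $(A/aA)^*$, which is strictly larger (Carlitz module, $\deg a\ge 2$). So (i) assigns no $F(g)$ to most $g$.
\item Even a permutation of the finite set $F(\Lambda_\rho[a])$ extends to a field automorphism of $K$ only if it respects every algebraic relation among these numbers over $\mathbf{Q}$. Nothing in Theorem~\ref{thm2.1} provides that.
\item That set has at most $r$ elements, so a faithful permutation action forces $|G|\le r!$. For the Carlitz module with $a=T$ and $q>2$ one has $r=1$ and $G\cong\mathbf{F}_q^*\ne 1$. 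The set is then a single point, the induced action is trivial, and your faithfulness claim would force $G=1$.
\end{itemize}
Your proposed rescue does not help. That the $K_0$-data determine the module up to isomorphism says nothing about whether a particular automorphism acts trivially. Faithfulness is a statement about morphisms, and Theorem~\ref{thm2.1}(i) asserts nothing about injectivity on morphisms.

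Step 3 fails independently. Your Step 2 never uses the hypothesis $k\subset(\mathbf{C}-\mathbf{R})\cup\mathbf{Q}$. If it were valid, it would also give, for real $k$, that $K=k(\beta_j)$ is Galois over $k$ with group $G$. Here $\beta_j:=e^{2\pi i\alpha_j+\log\log\varepsilon}$, whose real part is $\cos 2\pi\alpha_j\log\varepsilon$ as in your decomposition.
\begin{itemize}
\item Complex conjugation fixes $k$ pointwise, so by normality it restricts to some $c\in Gal(K|k)$. Then $\cos 2\pi\alpha_j\log\varepsilon=\tfrac12(\beta_j+c(\beta_j))$ lies in $K^{\langle c\rangle}=K\cap\mathbf{R}$.
\item If $K\not\subset\mathbf{R}$, then $[K\cap\mathbf{R}:k]=|G|/2$. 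The real parts therefore generate an extension of degree at most $|G|/2$, which cannot have Galois group $G$.
\item If $K\subset\mathbf{R}$, the two lines of (\ref{eq2.4}) coincide and Step 3 proves nothing new.
\end{itemize}
So ``no information is lost'' is false exactly when the second case of (\ref{eq2.4}) differs from the first. Your argument never identifies what makes the case distinction necessary.
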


\subsection{Dynamical zeta function}
Let $f:M\to M$ be a diffeomorphism of a smooth manifold $M$. 
We assume that the number $N_m$ of fixed points of $f^m$ is finite 
for all $m=1, 2,  3, \dots$  The Artin-Mazur zeta function of $f$ is defined
by the formal power series:
\begin{equation}\label{eq2.5}
\zeta_{f}(s)=\exp\left(\sum_{m=1}^{\infty} \frac{N_m}{m} s^m\right), \quad s\in\mathbf{C}. 
\end{equation}
 Some fixed points of $f^m$ come from the fixed points of the lower powers of $f$. 
 We shall denote by  $K_m$ the number of periodic points of the least
 period $m$,  i.e. the ``new''  fixed points of $f^m$.     
A relation between the integers  $K_m$ and $N_m$ is given by the M\"obius inversion formula:
\begin{equation}\label{eq2.6}
K_m=\sum_{l | m}\mu(l) N_{m/l},
\end{equation}
 where 
\begin{equation}\label{eq2.7}
\mu(l)=
\begin{cases} 1, & \hbox{if} ~l=1,\cr
                     (-1)^k, & \hbox{if} ~l  ~\hbox{is product of}  ~k  ~\hbox{distinct primes},\cr 
                        0, & \hbox{if} ~l  ~\hbox{is divisible by a square number} ~>1,
\end{cases} 
\end{equation}
  is the M\"obius  function  [Smale 1967] \cite[Proposition 4.2]{Sma1}.

  \bigskip 
   The following result is due to Dold.
\begin{theorem}\label{thm2.3}
{\bf (\cite[p. 765]{Sma1})}
$K_m\equiv 0\mod ~m$. 
\end{theorem}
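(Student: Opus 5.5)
The plan is to prove Theorem \ref{thm2.3} directly from the definitions of Section 2.3, by a purely combinatorial orbit-counting argument. Let $P_m$ denote the set of periodic points of $f$ of least period $m$, so that $K_m=|P_m|$. This set is finite because it is contained in the fixed point set of $f^m$, which has $N_m<\infty$ elements by the standing assumption.

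First we check that $f$ maps $P_m$ into itself. If $x\in P_m$, then $f^m(f(x))=f(f^m(x))=f(x)$, so $f(x)$ is $m$-periodic. If $f(x)$ had a smaller period $l<m$, then
\[
f^{l}(x)=f^{m}(f^{l}(x))=f^{m-1}\bigl(f^{l}(f(x))\bigr)=f^{m-1}(f(x))=f^m(x)=x,
\]
which contradicts the minimality of $m$ for $x$. Hence $f(P_m)\subseteq P_m$. Since $f$ is a diffeomorphism, and in any case $f^m$ is the identity on $P_m$, the restriction $f|_{P_m}$ is a bijection. The cyclic group $\langle f\rangle$ therefore acts on $P_m$, and this action factors through $\mathbf{Z}/m\mathbf{Z}$.

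Next we compute the orbit sizes. For $x\in P_m$, the stabilizer of $x$ in $\mathbf{Z}/m\mathbf{Z}$ consists of those $j$ with $f^j(x)=x$. By least-periodicity, the only such $j$ with $0\le j<m$ is $j=0$, so the stabilizer is trivial. Consequently the points $x,f(x),\dots,f^{m-1}(x)$ are pairwise distinct, and every orbit has exactly $m$ elements.

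Finally, $P_m$ is the disjoint union of these orbits, so $K_m=m\cdot(\text{number of orbits})\equiv 0 \bmod m$. This argument does not need the M\"obius formula (\ref{eq2.6}). As a consistency check one can also phrase it as $N_m=\sum_{d\mid m}K_d$, which is the inverse of (\ref{eq2.6}). The only step needing real care is the invariance of $P_m$ under $f$, that is, that $f$ preserves the least period. This is the short computation in the second paragraph, and I expect no genuine obstacle beyond it.
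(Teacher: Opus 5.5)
Your proof is correct, but it is not what the paper does: the paper gives no argument for Theorem \ref{thm2.3}. It simply quotes the congruence from Smale and attributes it to Dold. Your orbit count is self-contained and uses almost nothing beyond the fact that $K_m$ is the cardinality of a finite set of actual points. As you note, the diffeomorphism hypothesis is not needed, since $f^m=\mathrm{id}$ on $P_m$ already makes $f|_{P_m}$ bijective. The argument also identifies $K_m/m$ as the number of periodic orbits of least period $m$, so it is a nonnegative integer.

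What the citation would buy is the deeper result that carries Dold's name: the congruence $\sum_{d\mid m}\mu(m/d)\,i(f^d)\equiv 0 \pmod m$ for fixed point indices, in particular for Lefschetz numbers $L(f^d)$. There the M\"obius-inverted quantities are no longer cardinalities, because a fixed point can have index different from $+1$, and your orbit argument says nothing about them. That is the version relevant to the paper's later use of the theorem, since in Lemma \ref{lm3.3} the $N_m$ come from a Lefschetz trace formula rather than from counting points. Indeed, writing (\ref{eq1.2}) as the Euler product (\ref{eq2.9}) gives $(1-s)^{-8}(1-s^2)^{-1}(1-s^4)(1-s^6)$, that is, $K_4=-4$ and $K_6=-6$. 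These satisfy the congruence but cannot be numbers of points.
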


The Lambert series of $\zeta_f(s)$ depend on $K_m$ as follows:
\begin{equation}\label{eq2.8}
\zeta_f(s)=\exp\left(\sum_{m=1}^{\infty} \frac{K_m}{m}\frac{s^m}{1-s^m}\right), \quad s\in\mathbf{C}.
\end{equation}

The Euler product formula for $\zeta_f(s)$ can be written in the following elegant form
 [Baake,  Lau \& Paskunas 2010] \cite{BaLaPa1}:
\begin{equation}\label{eq2.9}
\zeta_f(s)=\prod_{m=1}^{\infty} \frac{1}{(1-s^m)^{\frac{K_m}{m}}} , \quad s\in\mathbf{C},
\end{equation}
where the ratio $\frac{K_m}{m}$ is always an integer (Dold's Theorem \ref{thm2.3}).


\section{Proofs}
\subsection{Proof of Theorem \ref{thm1.1}}
Let us recall the main ideas outlined in Section 1. 
First, it is shown that cardinality of the set of imaginary quadratic 
fields of class number $h$ equals   the number $K_h$ of 
the least $h$-periodic points of the map $f(z)=\lambda e^z$
(Lemma \ref{lm3.1}). 
Next, we consider  a  function $f(z,\varepsilon)$ defined on the Riemann sphere $\mathbf{C}\cup\infty$. Such a function
 admits  a uniformization $\widetilde{f}$ on the double cover of the sphere by the disjoint union of four 
copies of complex tori $\mathbf{C}/(\mathbf{Z}+\mathbf{Z}\tau)$ (Figure 1). 
It is proved that the number of  the $h$-periodic  points of   $\widetilde{f}$ is equal to 
such of the Gr\"ossencharacter $\psi(\mathscr{P})$ (Frobenius endomorphisn $Fr_p$, resp.) on 
$\mathbf{C}/(\mathbf{Z}+\mathbf{Z}\tau)$ (on the elliptic curve $\mathscr{E}(\mathbf{F}_p)$, resp.)
as $\mathscr{P}\to 1$ (Lemma \ref{lm3.3}). 
Finally, it is proved that the  zeta function $\zeta_f (s)$
coincides with the local zeta function of    $\mathscr{E}(\mathbf{F}_1)$
(Lemma \ref{lm3.4}).  We pass to a detailed argument. 

\begin{lemma}\label{lm3.1}
The number $K_h$ of the least $h$-periodic points of the map 
 $f(z)=\lambda e^z$ is equal to $\# h$, i.e. 
 the cardinality of the set of imaginary quadratic fields of
 class number $h$.  
\end{lemma}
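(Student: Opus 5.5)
The plan is to read everything off the non-abelian class field theory of Corollary \ref{cor2.2} in rank $r=1$, and then compare two things attached to an imaginary quadratic field $k$. The first is the tower $k\subseteq K$ produced by Theorem \ref{thm2.1}. The second is the orbit structure of the map $f(z)=\lambda e^{z}$ with $\lambda=2\pi\log\varepsilon$.

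\textbf{Step 1: identify $K$ with the Hilbert class field.} For $r=1$ the extension $K=\mathbf{Q}(F(\Lambda_\rho[a]))$ is abelian over its fixed field $k$. Since $k\subset(\mathbf{C}-\mathbf{R})\cup\mathbf{Q}$, the first case of (\ref{eq2.4}) applies, so $K\cong k(e^{2\pi i\alpha+\log\log\varepsilon})$ and $Gal(K|k)\cong G$. Choosing $a$ and $\rho$ so that $K$ is the Hilbert class field of $k$, I will use $Gal(K|k)\cong Cl(k)$, so that $|G|=h$. This choice of $a$ and $\rho$ is exactly where fields such as $\mathbf{Q}(\sqrt{-1})$ can drop out, in line with Remark \ref{rmk1.2}.

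\textbf{Step 2: the case $h=1$.} Here $K=k$. By the identification $k\cong\mathbf{Q}(i\alpha)$, this equality means the generator $e^{2\pi i\alpha+\log\log\varepsilon}$ coincides with $i\alpha$. Writing $z=2\pi i\alpha$, this is precisely $f(z)=z$. Thus fields with $h=1$ correspond bijectively to fixed points of $f$, which gives $K_1=N_1=\#1$.

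\textbf{Step 3: general $h$.} I would interpret one application of $f$ as one application of a generator $\sigma$ of $Gal(K|k)$ acting on the generator $z$, i.e. pass from $z$ to the point representing $\sigma(e^{z})$ in the tower. Then $f^{m}(z)=z$ holds if and only if $\sigma^{m}=1$, that is, $h\mid m$. So the field $k$ contributes a point of least period exactly $h$.

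Summing over all imaginary quadratic fields gives $K_h=\#h$. This is consistent with the M\"obius relation (\ref{eq2.6}): $N_m=\sum_{h\mid m}K_h$. Dold's Theorem \ref{thm2.3} then provides a consistency check, since it forces the whole $f$-orbit of length $h$ to be accounted for within the class-number-$h$ fields. Counting those orbit points without over- or under-counting is the part I would verify explicitly.

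\textbf{Main obstacle.} The hard step is Step 3. The class group need not be cyclic, so a single generator $\sigma$ realised by $f$ does not exist in general. Even when $Cl(k)$ is cyclic, one must also show that the least period of $z$ is exactly $h$ and not a proper divisor.

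My first attempt would run the argument over a composition series (or the elementary divisors) of $Cl(k)$. The aim would be to show that the exponent and the order can both be detected from the periodic structure of $f$ and its uniformization. Failing that, I would restrict the statement to fields with cyclic class group and treat the rest as a correction term, again in the spirit of Remark \ref{rmk1.2}. Honestly, this identification is heuristic, and it is the point at which the argument is least secure.
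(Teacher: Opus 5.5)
There is a genuine gap, and it is where you placed it. In Step 3, nothing in Theorem \ref{thm2.1} or Corollary \ref{cor2.2} identifies one application of $f$ with a Galois automorphism $\sigma$. With $r=1$ you have only the single point $z=2\pi i\alpha$, so the other orbit points, and the requirement that $f$ land on them, are supplied by hand. The identification is also false in general. For $k=\mathbf{Q}(\sqrt{-21})$ one has $h=4$ but $Cl(k)\cong(\mathbf{Z}/2\mathbf{Z})^2$, so every $\sigma$ has order at most $2$, and your mechanism would put $k$ among points of least period at most $2$.

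The paper's proof is organised differently at exactly this point. The introduction speaks of $r=1$, but step (i) of the proof takes the generators $e^{2\pi i\alpha_j+\log\log\varepsilon}$ of $\mathbf{K}$ to be the $h$ Galois conjugates. So $r=h$, and $h$ points $z_j=2\pi i\alpha_j$ are available from the start. Then (\ref{eq3.2})--(\ref{eq3.3}) posit that $f$ acts on $\{z_1,\dots,z_h\}$ as the $h$-cycle $z_j\mapsto z_{j+1}$. That cycle is a permutation of the set of conjugates, not an element of $Gal(\mathbf{K}|k)$. Least period $h$ therefore follows once the $z_j$ are distinct, and the structure of $Cl(k)$ never enters. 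This decoupling of $f$ from the group law is what removes your cyclicity obstruction. It does not turn Step 3 into a proof, however: (\ref{eq3.2}) and (\ref{eq3.3}) are asserted in the paper (from maximality of $\mathbf{K}$ and a count of $k$-linearly independent numbers), not derived.

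Your counting concern is also genuine, and the paper does not settle it either. A field of class number $h$ produces a whole orbit $\{z_1,\dots,z_h\}$, i.e.\ $h$ points of least period $h$. Summing over fields would therefore give $K_h=h\cdot\#h$, whereas step (iv) of the paper asserts $K_h=\#h$ with the word ``Clearly''. Dold's Theorem \ref{thm2.3} cannot adjudicate this, because it constrains $K_h$ alone and says nothing about which periodic points come from which field. Moreover, the paper deduces $h\mid\#h$ from this lemma together with Dold, so leaning on Dold to fix the correspondence would be circular.

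The same issue affects Step 2. You show only that a field with $h=1$ yields a fixed point, whereas ``bijectively'' also requires that every fixed point arise from exactly one field. For the literal map $z\mapsto\lambda e^{z}$ on $\mathbf{C}$ with $\lambda\neq0$ this fails. Indeed $ze^{-z}-\lambda$ is entire of order one and not a polynomial times an exponential, so by Hadamard's theorem $ze^{-z}=\lambda$ has infinitely many solutions. In addition, $\lambda=2\pi\log\varepsilon$ depends on $k$ through $\varepsilon$, so summing over fields is not even a count for a single map. Any proof of the lemma would first have to fix the map and the set of points on which $K_h$ is counted, which neither your proposal nor the paper's argument does.
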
 
\begin{proof}
(i) Let $\mathbf{K}$ be the maximal unramified abelian extension of the imaginary quadratic 
field $k$.  By the class field theory, one gets $deg~(\mathbf{K}|k)=h$, where $h$ is the class number
of $k$. On the other hand, explicit formulas (\ref{eq2.4}) imply $\mathbf{K}\cong k\left(e^{2\pi i\alpha_j +\log\log\varepsilon}\right)$.  
The algebraic numbers $\{e^{2\pi i\alpha_j +\log\log\varepsilon} ~|~1\le j\le r\}$ are generators of the field $\mathbf{K}$, 
which are conjugate by the action of the Galois  group  $Gal~(\mathbf{K}|k)$.  In particular, one gets $r=h$.  

\medskip
(ii) Let $z_j=2\pi i\alpha_j$ and $\lambda=2\pi\log\varepsilon$. In view of   (\ref{eq2.4}), 
one can write:
\begin{equation}
\mathbf{K}\cong k \left(\frac{\lambda}{2\pi} e^{z_1},\dots, \frac{\lambda}{2\pi} e^{z_h}\right) \cong  k \left(\frac{1}{2\pi} f(z_1),\dots, \frac{1}{2\pi}f(z_h)\right), 
\end{equation}
where $f(z)=\lambda e^z$.  Since $\mathbf{K}$ is the maximal abelian extension of the number field $k$,
the following conditions must hold:
\begin{equation}\label{eq3.2}
\left\{
\begin{array}{ccc}
f(z_1) &\ne & z_1, \\
f(z_2) &\ne & z_2,\\
\vdots &&\\
f(z_{h-1}) &\ne & z_{h-1}, \\
f(z_h)  &=& z_1.
\end{array}
\right.
\end{equation}

\medskip
(iii) Likewise, all $z_j$ and $f(z_j)$ are of the form $2\pi$ times an algebraic number in the field $\mathbf{K}$.
There are $2h$ such numbers,  but only $h$ can be linearly independent over the field $k$. 
Without loss of generality, one  gets from (\ref{eq3.2}) the following $h$ constraints between  $z_j$ and $f(z_j)$:
\begin{equation}\label{eq3.3}
\left\{
\begin{array}{ccc}
f(z_1) &= & z_2, \\
f(z_2) &= & z_3,\\
\vdots &&\\
f(z_{h-1}) &= & z_{h}, \\
f(z_h)  &=& z_1.
\end{array}
\right.
\end{equation}
    
\medskip
(iv) It follows from (\ref{eq3.3}) that $f^j(z_1)=z_{j+1}$. 
In particular, $f^h(z_1)=z_{h+1}:=z_1$, i.e. $z_1$ is
a periodic point of the map $f$ having  the least period  $h$.
Clearly, the total number $K_h$ of such points is equal to cardinality $\# h$
of the set of imaginary quadratic fields  of class number $h$. 

\bigskip
Lemma \ref{lm3.1} is proved.   
\end{proof}

\begin{corollary}\label{cor3.2}
The Artin-Mazur zeta function of the map $f(z)=\lambda e^z$ is given by the following Lambert series:
\begin{equation}\label{eq3.4}
\zeta_{f}(s)= \exp\left(\sum_{h=1}^{\infty} \frac{\# h}{h} \frac{s^h}{1-s^h}\right), \quad s\in\mathbf{C}. 
\end{equation}
\end{corollary}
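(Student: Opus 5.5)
The corollary should follow by combining Lemma \ref{lm3.1} with the Lambert series form (\ref{eq2.8}) of the Artin--Mazur zeta function. I would proceed formally, treating all series as formal power series in $s$, and then comment on convergence.

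\textbf{Step 1: the standard identity.} First I would recall why (\ref{eq2.8}) holds for any map $f$ with $N_m<\infty$. The set of fixed points of $f^m$ is the disjoint union, over divisors $l\mid m$, of the points of least period $l$. Hence $N_m=\sum_{l\mid m}K_l$; this is the identity inverted by (\ref{eq2.6}). Substituting into (\ref{eq2.5}) and writing $m=lk$ gives
\begin{equation*}
\sum_{m\ge1}\frac{N_m}{m}s^m=\sum_{l\ge1}K_l\sum_{k\ge1}\frac{s^{lk}}{lk}.
\end{equation*}
The paper records the resulting Lambert form as (\ref{eq2.8}). I will take (\ref{eq2.8}) and the Euler product (\ref{eq2.9}) as given from the cited sources, with Theorem \ref{thm2.3} guaranteeing that the exponents $K_m/m$ are integers.

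\textbf{Step 2: substitution.} By Lemma \ref{lm3.1}, $K_h=\# h$ for every $h\ge1$. Replacing $K_m$ by $\# m$ in (\ref{eq2.8}) and renaming the summation index $m$ to $h$ yields (\ref{eq3.4}) immediately.

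\textbf{Step 3: finiteness and convergence.} Before substituting, I must check that every $N_m$ is finite, since (\ref{eq2.5}) requires this. This follows from $N_m=\sum_{l\mid m}K_l=\sum_{l\mid m}\#l$ together with Heilbronn's theorem that each $\#h$ is finite. The resulting identity then holds as formal power series. To read it as an identity of functions, I would restrict to $|s|<1$. There the series converges provided $\#h$ grows at most polynomially, which holds because $\#h$ is bounded by the number of discriminants of size $O(h^2\log^2 h)$.

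\textbf{Main obstacle.} The step requiring the most care is the bookkeeping in Step 1, specifically the passage from the sum involving $N_m$ to the Lambert series. In the paper this is quoted from \cite{BaLaPa1} rather than rederived, so I would simply invoke (\ref{eq2.8}). The only genuinely new input is Lemma \ref{lm3.1}.
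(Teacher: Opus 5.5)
You follow the paper's proof exactly: substitute $K_h=\# h$ from Lemma \ref{lm3.1} into (\ref{eq2.8}). However, the step you defer to the literature is precisely the one that fails, and your own Step 1 exposes it. You correctly found
\[
\sum_{m\ge1}\frac{N_m}{m}s^m=\sum_{l\ge1}\frac{K_l}{l}\sum_{k\ge1}\frac{s^{lk}}{k}=-\sum_{l\ge1}\frac{K_l}{l}\log(1-s^l).
\]
The exponent in (\ref{eq2.8}) is instead $\sum_{l}\frac{K_l}{l}\frac{s^l}{1-s^l}=\sum_{l}\frac{K_l}{l}\sum_{k\ge1}s^{lk}$, so the factor $1/k$ has been dropped. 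The two exponents differ by $\sum_{l}\frac{K_l}{l}\sum_{k\ge2}\bigl(1-\frac1k\bigr)s^{lk}$. This series has nonnegative coefficients and is nonzero as soon as a single $K_l$ is positive. For a single fixed point, $\zeta_f(s)=(1-s)^{-1}$, while $\exp\bigl(s/(1-s)\bigr)=1+s+\tfrac32 s^2+\cdots$. Hence (\ref{eq2.8}) and (\ref{eq2.9}) cannot both be taken as given: (\ref{eq2.9}) is correct and (\ref{eq2.8}) is not. The same slip makes the two sides of (\ref{eq1.1}) unequal. Since $\# 1>0$ (the paper itself obtains $\# 1=8$), granting Lemma \ref{lm3.1}, the identity (\ref{eq3.4}) as literally stated is false. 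Neither your argument nor the paper's one-line proof can establish it.

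What Step 1 together with Lemma \ref{lm3.1} does prove is the Euler product $\zeta_f(s)=\prod_{h\ge1}(1-s^h)^{-\# h/h}=\exp\bigl(-\sum_{h}\frac{\# h}{h}\log(1-s^h)\bigr)$. A Lambert series legitimately appears only in the logarithmic derivative, $s\,\zeta_f'(s)/\zeta_f(s)=\sum_{m}N_m s^m=\sum_{h}\# h\,\frac{s^h}{1-s^h}$, and that is what you should state and prove. There are also two smaller points in Step 3. First, the bound $|d|=O(h^2\log^2 h)$ is equivalent to $L(1,\chi_d)\gg 1/\log|d|$, which would essentially rule out Siegel zeros and is open. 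Siegel's (ineffective) theorem $|d|\ll_\epsilon h^{2+\epsilon}$ still gives the polynomial growth you need. Second, your finiteness of $N_m$ rests entirely on Lemma \ref{lm3.1}, not on the dynamics. For $\varepsilon\ne1$ one has $\lambda\ne0$, and then $z\mapsto\lambda e^z$ has infinitely many fixed points in $\mathbf{C}$, because $\lambda e^z-z$ is entire of order one and is not a polynomial times an exponential.
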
 
\begin{proof}
Lemma \ref{lm3.1} says that $\# h=K_h$ for all $h\ge 1$. The conclusion of Corollary \ref{cor3.2}
follows from formua (\ref{eq2.8}) with  $m=h$.
\end{proof}

\begin{figure}
\begin{picture}(400,180)(0,40)

\put(215,60){$f(z,\varepsilon)$}

\put(60,180){$\widetilde{f}$}

\put(140,180){$\widetilde{f}$}
\put(220,180){$\widetilde{f}$}
\put(300,180){$\widetilde{f}$}


\put(60,150){\oval(60,40)}
\qbezier(50,153)(60,138)(70,153)
\qbezier(55,148)(60,153)(65,148)


\put(140,150){\oval(60,40)}
\qbezier(130,153)(140,138)(150,153)
\qbezier(135,148)(140,153)(145,148)


\put(220,150){\oval(60,40)}
\qbezier(210,153)(220,138)(230,153)
\qbezier(215,148)(220,153)(225,148)


\put(300,150){\oval(60,40)}
\qbezier(290,153)(300,138)(310,153)
\qbezier(295,148)(300,153)(305,148)


\put(140,125){\vector(1,-1){30}}
\put(225,125){\vector(-1,-1){30}}

\put(90,125){\vector(2,-1){70}}
\put(280,125){\vector(-2,-1){70}}


\put(185,65){\circle{45}}
\qbezier(165,63)(185,48)(205,63)

\end{picture}
\caption{Double cover of the Riemann sphere  $\mathbf{C}\cup\infty$ by the disjoint union of four copies of complex tori
$\mathbf{C}/(\mathbf{Z}+\mathbf{Z}\tau)$}
\end{figure}
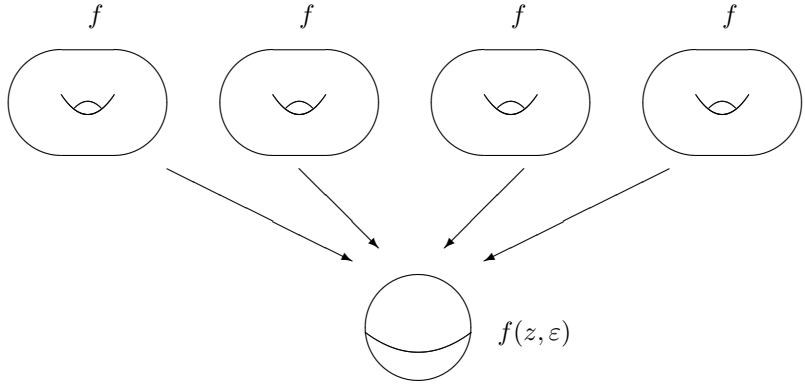

Recall that the set $U:= \{\pm 1, \pm i, \frac{\pm 1\pm i\sqrt{3}}{2}\}$  consists of all roots of unity of degrees $1$ and $2$. 
Consider an eight-valued function in the second variable $f(z,\varepsilon)=\lambda e^z$, where $\lambda=2\pi\log\varepsilon$
and $\varepsilon\in U$.  Such a function admits  a uniformization $\widetilde{f}$ on the double cover of the Riemann sphere 
$\mathbf{C}\cup\infty$ by the disjoint union of four  copies of complex tori $\{\mathbf{C}/(\mathbf{Z}+\mathbf{Z}\tau) ~|~\tau\in U\}$ as shown 
in Figure 1.  One copy of such tori corresponds to a pair of  complex conjugate values of $\tau$.
\footnote{Except for the real values $\tau=\pm 1$. This case shoud be treated aa a ``ghost'' copy corresponding to the common factor 
$1-s$ in the numerator and denominator of the rational function (\ref{eq1.2}).}
Likewise,  one can think of 
$\mathbf{C}/(\mathbf{Z}+\mathbf{Z}\tau)$  as an elliptic curve  $\mathscr{E}_{CM}$  with complex multiplication by the ring of integers of the field $\mathbf{Q}(\tau)$. 
Consider the reduction $\mathscr{E}_{CM}(\mathbf{F}_p)$  of the latter modulo the prime ideal $\mathscr{P}$ over a good prime $p$
and let $Fr_{p^m}: x\mapsto x^{p^m}$ be the Frobenius endomorphism of  $\mathscr{E}_{CM}(\mathbf{F}_p)$.  
We denote by $Fr_{1^m}$ the formal limit of $Fr_{p^m}$ as $p\to 1$.

\begin{lemma}\label{lm3.3}
The number of fixed points of the map $\widetilde{f}^m$ is 
equal to such  of the map  $Fr_{1^m}$. 
\end{lemma}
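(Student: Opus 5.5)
The plan is to compute both fixed-point counts as degrees of isogenies of the same complex torus $\mathbf{C}/(\mathbf{Z}+\mathbf{Z}\tau)$, $\tau\in U$, and then to compare them. The bridge is the classical fact that an endomorphism $\alpha$ of an elliptic curve $\mathscr{E}_{CM}$ with $1-\alpha\ne 0$ has exactly $\deg(1-\alpha)=|1-\alpha|^2$ fixed points. This holds both over $\mathbf{C}$ and over $\overline{\mathbf{F}}_p$ when $1-\alpha$ is separable.

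\emph{Frobenius side.} First treat the side of $Fr_{p^m}$. For a good prime $p$ and a prime ideal $\mathscr{P}$ over $p$, the theory of complex multiplication (Deuring) identifies $Fr_p$ on $\mathscr{E}_{CM}(\mathbf{F}_p)$ with multiplication by the Gr\"ossencharacter value $\pi=\psi(\mathscr{P})\in O_{\mathbf{Q}(\tau)}$, where $\pi\bar\pi=p$. Since $1-\pi^m$ is separable, this gives
\begin{equation*}
\#\,\mathrm{Fix}(Fr_{p^m})=|1-\pi^m|^2=1-(\pi^m+\bar\pi^m)+p^m .
\end{equation*}
Next pass to the formal limit $p\to 1$. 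The constraint $\pi\bar\pi=p\to 1$ forces $\pi$ to become a unit of $O_{\mathbf{Q}(\tau)}$, that is, an element of $U$. The natural choice on the copy labelled by $\tau$ is $\pi\to\tau$. The number of fixed points of $Fr_{1^m}$ should therefore be defined as $|1-\tau^m|^2=2-\tau^m-\bar\tau^m$.

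\emph{Dynamical side.} On the other side, I would make explicit the uniformization $\widetilde f$ of $f(z,\varepsilon)=\lambda e^z$ shown in Figure 1. The exponential $z\mapsto e^z$ factors through the quotient by $2\pi i\mathbf{Z}$. Composing with the lattice $\mathbf{Z}+\mathbf{Z}\tau$ and using $\log\varepsilon$ with $\varepsilon=\tau\in U$, the goal is to show that $\widetilde f$ acts on each torus as the complex multiplication $z\mapsto\tau z$, up to translation, which does not change fixed-point counts of an affine map with nonzero linear part $1-\tau^m$. Granting this, the fixed points of $\widetilde f^m$ are the kernel of $1-\tau^m$ on $\mathbf{C}/(\mathbf{Z}+\mathbf{Z}\tau)$, whose order is again $|1-\tau^m|^2$. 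This matches the Frobenius count copy by copy, and summing over the four copies gives the lemma.

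\emph{Main obstacle.} The hard step is the identification of $\widetilde f$ with multiplication by $\tau$. It is here that the eight-valuedness in $\varepsilon$ and the pairing of complex conjugate values of $\tau$ into a single torus must be used. I would first try to read it off from the monodromy of $\log\varepsilon$ around the branch points of the double cover.

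A second difficulty is degeneracy. When $\tau^m=1$, for instance when $\tau=\pm1$ or $m$ is divisible by the order of $\tau$, the map $1-\tau^m$ vanishes and the fixed-point set is the whole torus. In these cases both counts must be interpreted via the formal $p\to1$ limit, in which $p^m\to 1$ and the count is $1-a_m+1$ with $a_m=\tau^m+\bar\tau^m$, rather than via the kernel count. These are precisely the ``ghost'' copies mentioned in the footnote, and I expect the rigorous justification of this limiting interpretation to be the weakest point of the argument.
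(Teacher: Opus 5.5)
Your route is the same as the paper's. The paper also evaluates the Frobenius side as a Lefschetz number $1-\mathrm{tr}+p^m$ (its (\ref{eq3.5}), with $[L_p]=\left(\begin{smallmatrix}\mathrm{tr}\,[\psi(\mathscr{P})] & p\\ -1 & 0\end{smallmatrix}\right)$ acting on $K_0(\mathscr{A}_{RM})\cong H_1\cong\mathbf{Z}^2$), and this is your $|1-\pi^m|^2$. It then passes to $p=1$ and needs exactly the step you flag: that $\widetilde f$ acts on each $\mathscr{E}_i$ like the unit $\tau$. For holomorphic $\widetilde f$, your map-level version and the paper's homological version are essentially equivalent, since holomorphic self-maps of a complex torus are affine with linear part determined by the action on $H_1$.

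\textbf{The gap.} This step is not proved, and the approach you sketch cannot prove it, because nothing about $\lambda e^z$ forces it.
\begin{itemize}
\item An entire $g$ with $g(z+\omega)-g(z)\in\mathbf{Z}+\mathbf{Z}\tau$ for all lattice vectors $\omega$ has doubly periodic derivative, hence is affine. So $\lambda e^z$ induces no map on $\mathbf{C}/(\mathbf{Z}+\mathbf{Z}\tau)$.
\item A holomorphic self-map of $\mathscr{E}_i$ can only cover a \emph{rational} map under the degree-two cover of Figure 1, never the transcendental $\lambda e^w$. For instance, $z\mapsto\tau z$ covers the rotation $w\mapsto\tau^{-2}w$, because $\wp(\tau z)=\tau^{-2}\wp(z)$ when $\tau\Lambda=\Lambda$.
\item The ``monodromy of $\log\varepsilon$'' does not help. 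The parameter $\varepsilon$ ranges over the finite set $U$, and changing the branch of $\log$ only shifts $\lambda$ by a multiple of $4\pi^2 i$, i.e.\ replaces $f$ by another exponential map.
\end{itemize}
So $\widetilde f(z)=\tau z+c$ can only be imposed as the \emph{definition} of $\widetilde f$. The lemma is then immediate, since both maps become multiplication by $\tau^m$ read through the formal limit $p\to1$. But it then carries no information about the periodic points of $f$ counted in Lemma \ref{lm3.1}.

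\textbf{How the paper handles this step.} The paper does not close this hole with an argument either. In step (iv) it plays the invertibility of $\widetilde f$ against $\det[L_p]=p$ to single out $p=1$. In step (v) it simply asserts that $[L_1]$ and $[\widetilde f]$ are similar in $GL_2(\mathbf{Z})$ because both are ``built from $U$''.

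What it gains by staying on $H_1$ is that it only needs $\mathrm{tr}\,[\widetilde f^m]=\mathrm{tr}\,[L_1^m]$ and counts by the Lefschetz number $2-\mathrm{tr}\,[L_1^m]$. This absorbs your degenerate cases automatically. When $\tau^m=1\neq\tau$, your $\widetilde f^m$ is the identity, so its true fixed-point set is the whole torus, while the Lefschetz number is $2-a_m=0$. The price is that ``number of fixed points'' is silently read as a Lefschetz number.

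\textbf{The ghost copy.} Your degenerate cases are not the footnote's ghost copy.
\begin{itemize}
\item The ghost copy is a single component carrying both eigenvalues $1$ and $-1$. It gives $\mathrm{char}\,Fr_1^1=1-s^2$ in Corollary \ref{cor3.5}.
\item Your separate copies for $\tau=1$ and $\tau=-1$ would contribute $2-2\tau^m$ each. For real $\tau$, $\mathbf{C}/(\mathbf{Z}+\mathbf{Z}\tau)$ is not a torus.
\item The cases $\tau^m=1$ with $\tau\notin\mathbf{R}$ occur on genuine components, not on the ghost.
\end{itemize}
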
 
\begin{proof}
(i) Denote by $\psi(\mathscr{P})$ the Gr\"ossencharacter associated to  elliptic curve  $\mathscr{E}_{CM}$
at  the prime ideal $\mathscr{P}$ over a good prime
 $p$. Let  $[\psi(\mathscr{P})]$ be an endomorphism of  $\mathscr{E}_{CM}$ corresponding 
 to the Frobenius endomorphism $Fr_p$ of the reduction  $\mathscr{E}_{CM}(\mathbf{F}_p)$ of   $\mathscr{E}_{CM}$
 modulo $\mathscr{P}$ [Silverman 1994] \cite[Chapter II \S 9]{S}.  Then the RHS  of diagram in Figure 2 is known to be
 commutative  [Silverman 1994] \cite[Chapter II, Proposition 10.4]{S}.

\begin{figure}
\begin{picture}(300,110)(-30,0)
\put(20,70){\vector(0,-1){35}}
\put(122,70){\vector(0,-1){35}}
\put(220,70){\vector(0,-1){35}}
\put(100,83){\vector(-1,0){60}}
\put(100,23){\vector(-1,0){60}}
\put(135,83){\vector(1,0){60}}
\put(135,23){\vector(1,0){60}}
\put(10,20){$\mathscr{A}_{RM}$}
\put(110,20){$\mathscr{E}_{CM}$}
\put(200,20){$\mathscr{E}_{CM}(\mathbf{F}_p)$}
\put(10,80){$\mathscr{A}_{RM}$}
\put(105,80){ $\mathscr{E}_{CM}$}
\put(200,80){$\mathscr{E}_{CM}(\mathbf{F}_p)$}

\put(30,50){$[L_p]$}
\put(130,50){$[\psi(\mathscr{P})]$}
\put(230,50){$Fr_p$}
\end{picture}
\caption{Gr\"ossencharacter action on noncommutative tori}
\end{figure}

\medskip
(ii) On the other hand,  the $[\psi(\mathscr{P})]$
defines an endomorphism $L_p$ of the noncommutative torus
$\mathscr{A}_{RM}$  corresponding to the $\mathscr{E}_{CM}$
\cite[Section 1.3]{N}. We shall denote by $[L_p]$
the action of $L_p$ on the group $K_0(\mathscr{A}_{RM})\cong \mathbf{Z}^2$ 
 [Blackadar 1986] \cite{B}; see the  diagram in Figure 2. 
Specifically,  $[L_p]=\left(\begin{smallmatrix} tr~[\psi(\mathscr{P})] & p\cr -1 & 0\end{smallmatrix}\right)$, where $tr$ is the trace
of $[\psi(\mathscr{P})]$ as a complex number \cite[Section 6.5.1]{N}. 
In particular, $tr ~[\psi^m(\mathscr{P})]=tr~[L_p^m]$ for every $m\ge 1$, {\it ibid.}

\medskip
(iii) By the Lefschetz fixed-point formula, one gets:
\begin{equation}\label{eq3.5}
N_m:=
|\mathscr{E}_{CM}(\mathbf{F}_{p^m})|=
\sum_{i=0}^2 (-1)^i tr ~[\psi^m(\mathscr{P})]_i=
\sum_{i=0}^2 (-1)^i tr ~[L_p^m]_i,
\end{equation}
where $tr ~[\psi^m(\mathscr{P})]_0=tr ~[L_p^m]_0=1$,
\quad
$tr ~[\psi^m(\mathscr{P})]_2=tr ~[L_p^m]_2=p^m$ 
and $tr ~[\psi^m(\mathscr{P})]_1:=tr ~[\psi^m(\mathscr{P})]= tr~[L_p^m]$. 

\medskip
(iv) Denote by $[\widetilde{f}]$ the action of $\widetilde{f}$  on the first homology  of topological torus.
Since such a  group is isomorphic to $K_0(\mathscr{A}_{RM})$, 
one  can compare    $[\widetilde{f}]$  and $[L_p]$. 
Recall that the map $f(z)=\lambda e^z$ has an inverse given by the
Lambert $W$-function \cite{Nik2}. Thus the  maps  $\widetilde{f}$ and $[\widetilde{f}]$ are 
invertible. But  $\det ~[L_p]=p$,  so that  $[L_p]$ is an invertible map 
if and only if $p=1$.  Thus one needs to  compare   $[L_1]$ with   $[\widetilde{f}]$.

\medskip
(v) Recall  that both  $[L_1]$ and   $[\widetilde{f}]$ were constructed from  the same  set of 
the arithmetic data $U=\{\pm 1, \pm i, \frac{\pm 1\pm i\sqrt{3}}{2}\}$.
Indeed, the $\mathscr{E}_{CM}\cong \mathbf{C}/(\mathbf{Z}+\tau\mathbf{Z})$ is defined by $\tau\in U$,  except for the  pair of real values. 
Thus  $[L_p]$ depends on the set $U$ for all primes including $p=1$. Likewise, $\widetilde{f}$ was constructed  from  the function $f(z,\varepsilon)$ 
by uniformization of the variable  $\varepsilon\in U$, again, except for the  pair of real values.  Thus matrix $[\widetilde{f}]$ is defined by the set $U$. 
In other words,  $[L_1]$ and   $[\widetilde{f}]$ are given by similar matrices in the group $GL_2(\mathbf{Z})$. 
In particular, $tr ~[L_1^m]=tr ~[\widetilde{f}^m]$, where  $m\ge 1$.

\medskip
(vi) The rest of proof  follows from formula (\ref{eq3.5}) when $p=1$.  Namely, one gets
$N_m= |\mathscr{E}_{CM}(\mathbf{F}_{1^m})|= \sum_{i=0}^2 (-1)^i tr ~[\widetilde{f}^m]_i$,
i.e. the number of fixed points of the map $\widetilde{f}^m$ is equal to such of the Frobenius map 
$Fr_{1^m}$. 

\bigskip
Lemma \ref{lm3.3} is proved. 
\end{proof}

\bigskip
Denote by $\{\mathscr{E}_i ~|~1\le i\le 4\}$ the connected component (an elliptic curve) of the double cover of the Riemann sphere  
$\mathbf{C}\cup\infty$ as shown in Figure 1.  Let $\zeta_{\mathscr{E}_i(\mathbf{F}_1)}(s)=\prod_{i=0}^2 \left(char~Fr_1^i\right)^{(-1)^{i+1}}$
be a (formal) local zeta function of  $\mathscr{E}_i$ at $p=1$, where $char$ is the characteristic polynomial of $i$-th Frobenius map $Fr_1^i$
with  $Fr_1^0=Fr_1^2=s-1$ and $Fr_1^1:=Fr_1$.  
\begin{lemma}\label{lm3.4}
\displaymath
\zeta_{f}(s)= \prod_{i=1}^4\zeta_{\mathscr{E}_i(\mathbf{F}_1)}(s). 
\enddisplaymath
\end{lemma}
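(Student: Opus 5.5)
The plan is to compute the Artin--Mazur zeta function $\zeta_f(s)$ of (\ref{eq2.5}) through the numbers $N_m$ of fixed points of $f^m$. I would replace these by the fixed points of the uniformized map $\widetilde{f}$ on the disjoint union $\bigsqcup_{i=1}^4\mathscr{E}_i$ of Figure 1, and then apply Lemma \ref{lm3.3} componentwise. The identity to be proved is multiplicative in the components. Since $\exp$ turns sums into products, it suffices to establish an additive identity for each $m\ge 1$:
\begin{equation*}
N_m(f)=\sum_{i=1}^4 N_m\bigl(\widetilde{f}|_{\mathscr{E}_i}\bigr).
\end{equation*}
Here $N_m(\widetilde{f}|_{\mathscr{E}_i})$ denotes the number of fixed points of $\widetilde{f}^m$ on the $i$-th torus. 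This identity, together with a formal power series computation, is the whole argument.

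\emph{Step 1.} I would establish the additive identity above. The map $\widetilde{f}$ preserves each connected component $\mathscr{E}_i$, because each component is labelled by a fixed pair of conjugate values $\tau\in U$. Hence $\widetilde{f}^m$ also preserves components, and its fixed point set is the disjoint union of its fixed point sets on the $\mathscr{E}_i$. To match this with $N_m(f)$, I would use the correspondence from Lemma \ref{lm3.1}: periodic points $z_1$ of $f(z,\varepsilon)=\lambda e^{z}$ correspond to the pairs $(z,\varepsilon)$ with $\varepsilon\in U$. Each pair of complex conjugate values of $\varepsilon$ gives exactly one torus, and the real pair $\varepsilon=\pm1$ gives the ``ghost'' copy described in the footnote. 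So every periodic point of $f$ lifts to exactly one periodic point of $\widetilde{f}$ on exactly one component.

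\emph{Step 2.} For each $i$, Lemma \ref{lm3.3} together with formula (\ref{eq3.5}) at $p=1$ gives
\begin{equation*}
N_m\bigl(\widetilde{f}|_{\mathscr{E}_i}\bigr)=1-\mathrm{tr}\,[Fr_1^m]+1^m,
\end{equation*}
where $[Fr_1]$ is the $2\times 2$ matrix $[L_1]$, which is similar to $[\widetilde{f}|_{\mathscr{E}_i}]$. Write $\beta_i,\bar\beta_i$ for its eigenvalues. The standard identity $\exp\bigl(\sum_m \mathrm{tr}(M^m)s^m/m\bigr)=\det(1-sM)^{-1}$ then yields
\begin{equation*}
\exp\Bigl(\sum_{m\ge1}\frac{N_m(\widetilde{f}|_{\mathscr{E}_i})}{m}s^m\Bigr)=\frac{(1-\beta_i s)(1-\bar\beta_i s)}{(1-s)^2}.
\end{equation*}
This is precisely $\prod_{j=0}^2\bigl(\mathrm{char}\,Fr_1^j\bigr)^{(-1)^{j+1}}=\zeta_{\mathscr{E}_i(\mathbf{F}_1)}(s)$, with the paper's convention $\mathrm{char}\,Fr_1^0=\mathrm{char}\,Fr_1^2=s-1$, after reversing polynomials to the variable $s$.

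\emph{Step 3.} I would sum the exponents over $i=1,\dots,4$ using Step 1 and exponentiate. This gives $\zeta_f(s)=\prod_{i=1}^4\zeta_{\mathscr{E}_i(\mathbf{F}_1)}(s)$ as formal power series, which is the claim of Lemma \ref{lm3.4}.

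I expect Step 1 to be the main obstacle: it is the only non-formal point. One must check that passing to the double cover neither doubles nor loses periodic points. The double cover has two sheets over each point of $\mathbf{C}\cup\infty$, but the two sheets correspond to the conjugate values $\tau,\bar\tau$ that are identified in a single torus. I would therefore first try to argue that the covering involution commutes with $\widetilde{f}$ and identifies exactly the two lifts of a periodic point of $f$ within one component. Separately, the ghost copy for $\tau=\pm1$ must be handled. Its factor should be the one cancelling a factor $(1-s)$ between numerator and denominator, so I would verify that its contribution to $N_m$ is consistent with the trace of the corresponding degenerate $[L_1]$.
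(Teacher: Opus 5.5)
Your proposal is essentially the paper's proof. There too the fixed-point count of $\widetilde{f}^m$ is split as $N_m=\sum_{i=1}^4 N_m^i$ over the components $\mathscr{E}_i$, $\zeta_f(s)$ is rewritten as a product of four exponentials, and Lemma~\ref{lm3.3} is invoked to recognize each factor as $\zeta_{\mathscr{E}_i(\mathbf{F}_1)}(s)$; your Step~2 only makes that last identification explicit via $\exp\bigl(\sum_m \mathrm{tr}(M^m)s^m/m\bigr)=\det(1-sM)^{-1}$.

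The matching of fixed points of $f^m$ with those of $\widetilde{f}^m$, which you single out in Step~1, is not argued in the paper at all. It is used silently when $\zeta_f$ is written in terms of the counts $N_m^i$ for $\widetilde{f}$, so you are not missing anything the paper supplies. Be aware, though, that this matching cannot hold literally: for $\lambda\neq 0$ the equation $z=\lambda e^{z}$ already has infinitely many solutions in $\mathbf{C}$, so $N_m(f)$ is infinite.
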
 
\begin{proof}
(i) The number $N_m$ of fixed points of the map $\widetilde{f}^m$ is given by the formula 
$N_m=\sum_{i=0}^4 N_m^i$, where $N_m^i$ is such a number for the restriction of $\widetilde{f}^m$
to the $i$-th  connected component $\mathscr{E}_i$.

\medskip
(ii)  Let us calculate the Artin-Mazur zeta function of $f$.  In view of Lemma \ref{lm3.3}, 
one gets the following formula:
\begin{equation} 
\begin{array}{lll}
\zeta_{f}(s)  = \exp\left(\sum_{m=1}^{\infty} \frac{N_m^1+\dots+N_m^4}{m} s^m\right) &=& \\
&&\\
=\exp\left(\sum_{m=1}^{\infty} \frac{\ N_m^1}{m} s^m +\dots+ \sum_{m=1}^{\infty} \frac{\ N_m^4}{m} s^m\right)&=&
\prod_{i=1}^4 \exp\left(\sum_{m=1}^{\infty} \frac{\ N_m^i}{m} s^m \right)=\\
&&\\
= \prod_{i=1}^4\zeta_{\mathscr{E}_i(\mathbf{F}_1)}(s). && \nonumber     
\end{array}
\end{equation}

\bigskip
Lemma \ref{lm3.4} is proved.
\end{proof}

\begin{corollary}\label{cor3.5}
\begin{displaymath}
\zeta_{f}(s)=
\frac{(1+s^2)(1-s^6)}{(1-s)^8}, \quad s\in\mathbf{C}.
\end{displaymath}
\end{corollary}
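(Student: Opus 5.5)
The plan is to compute each factor in Lemma \ref{lm3.4} explicitly from its characteristic polynomials and then multiply. For a single component $\mathscr{E}_i$, the definition $\zeta_{\mathscr{E}_i(\mathbf{F}_1)}(s)=\prod_{j=0}^2 (char~Fr_1^j)^{(-1)^{j+1}}$, with $Fr_1^0=Fr_1^2$ contributing the factor $1-s$ each, gives
\begin{displaymath}
\zeta_{\mathscr{E}_i(\mathbf{F}_1)}(s)=\frac{\det\left(I-s[L_1]\right)}{(1-s)^2}=\frac{1-t_i s+s^2}{(1-s)^2}, \qquad t_i=tr~[L_1]=tr~[\widetilde{f}|_{\mathscr{E}_i}].
\end{displaymath}
This is the $q\to 1$ limit of the classical formula $(1-a s+qs^2)/((1-s)(1-qs))$. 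The identification of $[L_1]$ with $[\widetilde{f}]$ up to conjugacy in $GL_2(\mathbf{Z})$ comes from Lemma \ref{lm3.3}(v). Equivalently, I will check that $\exp\left(\sum_m N_m^i s^m/m\right)$ with $N_m^i=1-tr~[L_1^m]+1$ yields this expression, using $tr~[L_1^m]=\omega^m+\bar\omega^m$ for the eigenvalues $\omega,\bar\omega$ of $[L_1]$.

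The key step is to read off the eigenvalues of $[L_1]$ on each component from the set $U$. The eigenvalues are the generator $\tau$ of the CM order together with its complex conjugate, since the Grössencharacter at ``$p=1$'' must be a unit of norm $1$.

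\textbf{Case $\tau=\pm i$.} The eigenvalues are $\pm i$, so $t=0$ and the numerator is $1+s^2$.

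\textbf{Case $\tau=\frac{1\pm i\sqrt3}{2}$.} Here $t=1$, so the numerator is $1-s+s^2$.

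\textbf{Case $\tau=\frac{-1\pm i\sqrt3}{2}$.} Here $t=-1$, so the numerator is $1+s+s^2$.

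\textbf{Ghost component (the real values $\tau=\pm1$).} Following the footnote, I pair $1$ with $-1$ rather than with a complex conjugate. The eigenvalues are then $1$ and $-1$, and the numerator $\det(I-s[L_1])$ becomes $(1-s)(1+s)=1-s^2$.

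Multiplying the four factors gives the denominator $(1-s)^8$. The numerator is
\begin{displaymath}
(1+s^2)(1-s+s^2)(1+s+s^2)(1-s^2)=(1+s^2)(1+s^2+s^4)(1-s^2)=(1+s^2)(1-s^6),
\end{displaymath}
which is the claimed formula. Combined with Corollary \ref{cor3.2}, this also yields Theorem \ref{thm1.1}.

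The main obstacle is the ghost component. For the three genuine CM tori the trace is forced by $\tau$, but for the real pair one must justify using the eigenvalues $\{1,-1\}$, which give numerator $1-s^2$. The naive choice of the pair $\tau,\bar\tau=1,1$ would instead give $(1-s)^2$, and that choice does not produce the stated formula. The footnote's description of a common factor $1-s$ in numerator and denominator supports the choice $\{1,-1\}$. I would justify it by regarding the ghost copy as the torus attached to the two real roots $\pm1$ of $x^2-1$, so that its $[L_1]$ is $\mathrm{diag}(1,-1)$ with determinant $-1$, still invertible in $GL_2(\mathbf{Z})$ as step (iv) of Lemma \ref{lm3.3} requires.
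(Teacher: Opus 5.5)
Your proposal is essentially the paper's proof. Each factor in Lemma~\ref{lm3.4} is written as $\mathrm{char}\,Fr_1^i/(1-s)^2$, the four characteristic polynomials $1+s^2$, $1-s+s^2$, $1+s+s^2$ and $1-s^2$ are read off from the pairs in $U$ (with the real values $\pm1$ paired as the roots of $1-s^2$), and the product is contracted to $(1+s^2)(1-s^6)/(1-s)^8$. The paper simply asserts the ghost factor $1-s^2$ without further argument, so your justification goes beyond it; note, though, that taking $[L_1]=\mathrm{diag}(1,-1)$ with determinant $-1$ is at odds with $\det[L_p]=p$ in Lemma~\ref{lm3.3}(ii), a tension the paper's own choice shares.
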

\begin{proof}
(i) Recall that $\zeta_{\mathscr{E}_i(\mathbf{F}_1)}(s)=\frac{char ~Fr_1^i}{(1-s)^2}$,
where $Fr_1^i$ correspond to the Gr\"ossencharacters $\psi_i(\mathscr{P})$ at the prime $p=1$,
see Figure 2. In other words,  the complex number  $[\psi(\mathscr{P})]=\tau\in \{\pm 1, \pm i, \frac{\pm 1\pm i\sqrt{3}}{2}\}$.
These values of $\tau$ are roots of  the following set of the characteristic polynomials: 
\begin{equation}\label{eq3.6}
\left\{
\begin{array}{lll}
char ~Fr_1^1&= & 1-s^2, \\
char ~Fr_1^2 &= & 1+s^2,\\
char ~Fr_1^3 &= & 1-s+s^2, \\
char ~Fr_1^4  &=& 1+s+s^2.
\end{array}
\right.
\end{equation}

 \medskip
 (ii) We  substitute  equations (\ref{eq3.6}) into the formula  given by Lemma \ref{lm3.4}
 and   we contract:
\begin{equation}\label{eq3.7}
\zeta_{f}(s)=
\frac{(1+s^2)(1-s^6)}{(1-s)^8}, \quad s\in\mathbf{C}.
\end{equation}

 \bigskip
 Corollary \ref{cor3.5} is proved.
\end{proof}

\bigskip
To finish our proof of Theorem \ref{thm1.1},  we compare (\ref{eq3.4}) and (\ref{eq3.7}) with 
the definition of the zeta function $\zeta_{\mathcal{Q}}(s)$ given by formula (\ref{eq1.1}). 

\bigskip
Theorem \ref{thm1.1} is proved.

\subsection{Proof of Corollary \ref{cor1.3}: Part I}
\begin{proof}
Corollary \ref{cor1.3} is an implication  Dold's Theorem \ref{thm2.3}
applied to Lemma \ref{lm3.1}.  

\medskip
(i) Lemma \ref{lm3.1} says that $\# h=K_h$, where $K_h$ is the number of the least $h$-periodic 
points of the map $f(z)=\lambda e^z$.  On the other hand, Theorem \ref{thm2.3} implies 
$\# h=K_h\equiv 0 \mod h$.  In other words, 
\begin{equation}\label{eq3.8}
\# h= kh, \qquad k\in\{1,2,3,\dots\}.
\end{equation}

\medskip
(ii)  We recall that the set $\mathcal{Q}$ covered by our method excludes  some 
imaginary quadratic fields, e.g. $\mathbf{Q}(\sqrt{-1})$ having class number $h=1$ \cite[Remark 1.3]{Nik2}. 
Thus, in general, equation (\ref{eq3.8}) gives us a lower bound estimation $kh\lessapprox\# h$.  

\medskip
(iii) The second statement of Part I  of Corollary \ref{cor1.3} follows from an obvious remark that  $h<kh$ for all  $k\in\{1,2,3,\dots\}$.

\bigskip
Part I of Corollary \ref{cor1.3} is proved. 
\end{proof}

\subsection{Proof of Corollary \ref{cor1.3}: Part II}
\begin{proof}
This result  follows  from
 the Euler product formula (\ref{eq2.9})  for the zeta function  $\zeta_f(s)$.
Namely,  one gets from formulas  (\ref{eq1.2}), (\ref{eq2.9})  and Lemma \ref{lm3.1}  the following identity:
\begin{equation}\label{eq3.9}
\prod_{h=1}^{\infty} \frac{1}{(1-s^h)^{\frac{\# h}{h}}} \equiv
\frac{(1+s^2)(1-s^6)}{(1-s)^8}, \quad  s\in\mathbf{C}.
\end{equation}

\medskip
(i) One can write (\ref{eq3.9}) in the  equivalent form:
\begin{equation}\label{eq3.10}
\frac{1}{(1-s)^{\# \{h=1\}}}\prod_{h=2}^{\infty} \frac{1}{(1-s^h)^{\frac{\# h}{h}}} \equiv
\frac{1}{(1-s)^8} (1+s^2)(1-s^6). 
\end{equation}
Comparing the left  and right hand side of equation (\ref{eq3.10}),
one concludes that $\# \{h=1\}=8$, i.e. the number of imaginary quadratic fields 
of class number one is equal to eight. This result agrees with \cite[item (i) of Corollary 1.2  \& Remark 1.3]{Nik2}
and as a lower bound with Watkins' Table in Figure 3.

\medskip
(ii)  After cancellation of the common factor $(1-s)^{-8}$ at the both sides of (\ref{eq3.10}) and taking the reciprocals,
one gets: 
\begin{equation}\label{eq3.11}
\begin{array}{lll}
\prod_{h=2}^{\infty} \left(1-s^h\right)^{\frac{\# h}{h}} & \equiv &
\frac{1}{(1+s^2)(1-s^6)}\approx \\
&\approx & \left(1-s^2+s^4-\dots\right)\left(1+s^6+s^{12}+\dots\right). 
\end{array}
\end{equation}

\medskip
(iii) It is easy to see, that distributing the RHS of equation (\ref{eq3.11}),  one gets monomials  of the form $\pm x^{2m_1+6m_2}$, where $m_1$ and $m_2$
are some non-negative integers.

\medskip
(iv) Let $h=p$ be a prime number at the LHS  of equation (\ref{eq3.11}).  
Then distributing  $(1-s^p)^{\frac{\# p}{p}}$, one obtains  a monomial   $\pm x^{\# p}$.
To balance the LHS and RHS of equation (\ref{eq3.11}), the latter must have the form  $\pm x^{\# p}=\pm x^{2m_1+6m_2}$,
i.e. $\# p=2(m_1+3m_2)$ for some non-negative integers $m_1$ and $m_2$.  Clearly, there are no other divisors of $\# p$ 
distinct from $2$ and $p$. We conclude therefore that  $2p\lessapprox \# p$.

\medskip
(v) The second statement in item (ii) of Corollary \ref{cor1.3}   comes from an observation that whenever  
$h\ne p$, one can expect higher degree of divisibility of $\# h$ by the distinct primes.  Therefore the ratio $\frac{\# h}{h}$
must grow up.

\bigskip
Part II of Corollary \ref{cor1.3} is proved. 
\end{proof}


\begin{figure}

\begin{tabular}{rrr rr r rr r rr r rr r rr r}
N & \# & large & N & \# & large & N & \# & large & N & \# & large & N & \# & large \\
 1 & 9 & 163 & 21 & 85 & 61483 &  \fbox{41} &  \fbox{109} & 296587 &  \fbox{61} &  \fbox{132} & 606643 & 81 & 228 & 1030723 \\
 \fbox{2} &  \fbox{18} & 427 & 22 & 139 & 85507 & 42 & 339 & 280267 & 62 & 323 & 647707 & 82 & 402 & 1446547 \\
  \fbox{3} &  \fbox{16} & 907 &  \fbox{23} &  \fbox{68} & 90787 &  \fbox{43} &  \fbox{106} & 300787 & 63 & 216 & 991027 &  \fbox{83} &  \fbox{150} & 1074907 \\
 4 & 54 & 1555 & 24 & 511 & 111763 & 44 & 691 & 319867 & 64 & 1672 & 693067 & 84 & 1715 & 1225387 \\
  \fbox{5} &  \fbox{25} & 2683 & 25 & 95 & 93307 & 45 & 154 & 308323 & 65 & 164 & 703123 & 85 & 221 & 1285747 \\
 6 & 51 & 3763 & 26 & 190 & 103027 & 46 & 268 & 462883 & 66 & 530 & 958483 & 86 & 472 & 1534723 \\
 \fbox{7} &  \fbox{31} & 5923 & 27 & 93 & 103387 &  \fbox{47} &  \fbox{107} & 375523 &  \fbox{67} &  \fbox{120} & 652723 & 87 & 222 & 1261747 \\
 8 & 131 & 6307 & 28 & 457 & 126043 & 48 & 1365 & 335203 & 68 & 976 & 819163 & 88 & 1905 & 1265587 \\
 9 & 34 & 10627 &  \fbox{29} &  \fbox{83} & 166147 & 49 & 132 & 393187 & 69 & 209 & 888427 &  \fbox{89} &  \fbox{192} & 1429387 \\
 10 & 87 & 13843 & 30 & 255 & 134467 & 50 & 345 & 389467 & 70 & 560 & 811507 & 90 & 801 & 1548523 \\
  \fbox{11} &  \fbox{41} & 15667 &  \fbox{31} &  \fbox{73} & 133387 & 51 & 159 & 546067 &  \fbox{71} &  \fbox{150} & 909547 & 91 & 214 & 1391083 \\
 12 & 206 & 17803 & 32 & 708 & 164803 & 52 & 770 & 439147 & 72 & 1930 & 947923 & 92 & 1248 & 1452067 \\
  \fbox{13} &  \fbox{37} & 20563 & 33 & 101 & 222643 &  \fbox{53} &  \fbox{114} & 425107 &  \fbox{73} &  \fbox{119} & 886867 & 93 & 262 & 1475203 \\
 14 & 95 & 30067 & 34 & 219 & 189883 & 54 & 427 & 532123 & 74 & 407 & 951043 & 94 & 509 & 1587763 \\
 15 & 68 & 34483 & 35 & 103 & 210907 & 55 & 163 & 452083 & 75 & 237 & 916507 & 95 & 241 & 1659067 \\
 16 & 322 & 31243 & 36 & 668 & 217627 & 56 & 1205 & 494323 & 76 & 1075 & 1086187 & 96 & 3283 & 1684027 \\
 \fbox{17} &  \fbox{45} & 37123 &  \fbox{37} &  \fbox{85} & 158923 & 57 & 179 & 615883 & 77 & 216 & 1242763 & \fbox{97} & \fbox{185} & 1842523 \\
 18 & 150 & 48427 & 38 & 237 & 289963 & 58 & 291 & 586987 & 78 & 561 & 1004347 & 98 & 580 & 2383747 \\
  \fbox{19} &  \fbox{47} & 38707 & 39 & 115 & 253507 &  \fbox{59} &  \fbox{128} & 474307 &  \fbox{79} &  \fbox{175} & 1333963 & 99 & 289 & 1480627 \\
 20 & 350 & 58507 & 40 & 912 & 260947 & 60 & 1302 & 662803 & 80 & 2277 & 1165483 & 100 & 1736 & 1856563 \\
\end{tabular}

\caption{ [Watkins 2004] \cite[Table 4]{Wat1}}
The table consists of five columns each containing a class number $N\le 100$, the $\#$ of negative fundamental discriminants with
class number $N$ and the absolute value of the largest such discriminant.    
 All prime values $p$  of $N$  and the corresponding value of $\# p$ are boxed out 
to illustrate the estimate $2p\lessapprox \# p$ 
(Corollary \ref{cor1.3}).
\end{figure}

.

\section*{Data availability}
  
  Data sharing not applicable to this article as no datasets were generated or analyzed during the current study.
   
\section*{Conflict of interest}
On behalf of all co-authors, the corresponding author states that there is no conflict of interest.
  

\section*{Funding declaration}
The author was partly supported by the NSF-CBMS grant 2430454.

\bibliographystyle{amsplain}


\end{document}